\newtheorem{thm}{Theorem}[section]
\newtheorem{lem}[thm]{Lemma}
\theoremstyle{definition}
\newtheorem{cor}[thm]{Corollary}
\newtheorem{prob}[thm]{Problem}
\newcommand{\C}{{\mathbb C}}
\DeclareMathOperator\Der{Der}
\DeclareMathOperator\Ind{ind}
\DeclareMathOperator\one{\bf{1}}
\DeclareMathOperator\sym{Sym}
\DeclareMathOperator\cay{Cay}
\newcommand\id{\mathrm{id}}
\letcs\replicate{prg_replicate:nn} \newcommand*\longsum[1][1]{% 
	\mathop{\textnormal{% 
			\clipbox{0pt 0pt {.5\width} 0pt}{$\displaystyle\sum$}%
			\replicate{#1}{\clipbox{{.5\width} 0pt {.4\width} 0pt}{$\displaystyle\sum$}}% 
			\clipbox{{.6\width} 0pt 0pt 0pt}{$\displaystyle\sum$}}}% 
		}
\begin{document}	

\title[The Erdos-Ko-Rado Theorem for 2-pointwise and 2-setwise intersecting
permutations]{2-intersecting Permutations}

\author[Karen Meagher and  Bidy ]{Karen Meagher${^*}$ and  A. S. Razafimahatratra}
\address{Department of Mathematics and Statistics, University of Regina,
  Regina, Saskatchewan S4S 0A2, Canada}\email{karen.meagher@uregina.ca\\ sarobidy@uregina.ca}

\thanks{${^*}$Research supported in part by an NSERC Discovery Research Grant,
    Application No.:  RGPIN-03852-2018.}

\begin{abstract}
  In this paper we consider the Erd\H{o}s-Ko-Rado property for both
  $2$-pointwise and $2$-setwise intersecting permutations. Two
  permutations $\sigma,\tau \in \sym(n)$ are $t$-setwise intersecting
  if there exists a $t$-subset $S$ of $\{1,2,\dots,n\}$ such that
  $S^\sigma = S^\tau$. If for each $s\in S$, $s^\sigma = s^\tau$, then
  we say $\sigma$ and $\tau$ are $t$-pointwise intersecting.  We say
  that $\sym(n)$ has the $t$-setwise (resp. $t$-pointwise) intersecting
  property if for any family $\mathcal{F}$ of $t$-setwise
  (resp. $t$-pointwise) intersecting permutations, $|\mathcal{F}| \leq
  (n-t)!t!$ (resp. $|\mathcal{F}| \leq (n-t)!$). Ellis ([``Setwise intersecting
  families of permutations''. {\it Journal of Combinatorial Theory,
    Series A}, 119(4):825–849, 2012.]), proved that for $n$
  sufficiently large relative to $t$, $\sym(n)$ has the $t$-setwise
  intersecting property. Ellis also conjuctured that this result holds
  for all $n \geq t$. Ellis, Friedgut and Pilpel [Ellis, David, Ehud
  Friedgut, and Haran Pilpel. ``Intersecting families of permutations.''
  {\it Journal of the American Mathematical Society} 24(3):649-682, 2011.] also proved that for $n$ sufficiently large relative to
  $t$, $\sym(n)$ has the $t$-pointwise intersecting property. It is
  also conjectured that $\sym(n)$ has the $t$-pointwise intersecting
  propoperty for $n\geq 2t+1$. In this work, we prove these two
  conjectures for $\sym(n)$ when
  $t=2$.% by refining the method of Ellis In addition, we prove that $\sym(n)$ has the $2$-pointwise intersecting property.
\end{abstract}

\subjclass[2010]{Primary 05C35; Secondary 05C69, 20B05}

\keywords{derangement graph, independent sets, Erd\H{o}s-Ko-Rado
  theorem, Symmetric Group}

\date{\today}

\maketitle

\section{Introduction}

The study of intersecting properties of finite sets is a central theme
in extremal combinatorics. A collection or family of
subsets $\mathcal{F}$ of $\{1,2,\dots,n\}$ is called \emph{intersecting} if for
any $A,B\in \mathcal{F}$, $A\cap B \neq \varnothing$. In 1961,
Erd\H{o}s, Ko and Rado proved an important result on intersecting
families of $k$-subsetes of $\{1,2,\dots,n\}$. The collection of all $k$-subsets of
$\{1,2,\dots,n\}$ is denoted by $\binom{[n]}{k}$. This result is stated as follows.

\begin{thm}[Erd\H{o}s-Ko-Rado, \cite{erdos1961intersection}]
  For any positive integer $n$ and $k$ such that $n\geq 2k$, if
  $\mathcal{F}$ is an intersecting family of $\binom{[n]}{k}$, then
  $|\mathcal{F}| \leq \binom{n-1}{k-1}$. Moreover, if $n>2k$, then
  $|\mathcal{F}| = \binom{n-1}{k-1}$ if and only if
  $\mathcal{F}= \mathcal{F}_i = \left\{ A\in \binom{[n]}{k} \mid i\in
    A \right\}$, for some $i\in \{1,2,\dots,n\}$. \label{EKR}
\end{thm}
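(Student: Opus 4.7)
The plan is to use Katona's cyclic-permutation double-counting argument, which yields the size bound in a few lines and admits a tightness analysis that (with some care) recovers the characterization of extremal families when $n>2k$.

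I would start by considering the $n!$ linear arrangements $(a_1,\dots,a_n)$ of $[n]$, viewed cyclically so that indices are read modulo $n$. Call a $k$-subset $B\subseteq[n]$ a \emph{cyclic interval} of such an arrangement if $B=\{a_i,a_{i+1},\dots,a_{i+k-1}\}$ for some starting position $i$. A direct count shows that each fixed $k$-subset appears as a cyclic interval in exactly $n\cdot k!(n-k)!$ arrangements.

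The heart of the argument is the lemma that any intersecting family $\mathcal{F}\subseteq\binom{[n]}{k}$ contains at most $k$ cyclic intervals of any single arrangement. To see this, suppose $A\in\mathcal{F}$ is such an interval. Any other cyclic interval meeting $A$ is obtained by shifting the starting position by $\pm1,\pm2,\dots,\pm(k-1)$, giving $2(k-1)$ candidates. These pair up: for each $i\in\{1,\dots,k-1\}$, the shift by $+i$ and the shift by $-(k-i)$ have disjoint support (this is exactly where $n\geq 2k$ is used), so at most one member of each such pair lies in $\mathcal{F}$. Together with $A$ itself, this gives at most $k$ intervals. Double counting the pairs $(B,\pi)$ with $B\in\mathcal{F}$ a cyclic interval of $\pi$ then yields
\[
|\mathcal{F}|\cdot n\cdot k!(n-k)! \;\leq\; n!\cdot k,
\]
which rearranges to $|\mathcal{F}|\leq\binom{n-1}{k-1}$.

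For the characterization of equality when $n>2k$, the main obstacle is extracting structural information from tightness: equality forces every arrangement that contains some member of $\mathcal{F}$ as an interval to contribute exactly $k$ intervals to $\mathcal{F}$, and one must unpack this rigidity to conclude that all members of $\mathcal{F}$ share a common element. This is the trickiest step of the proof; a cleaner alternative I would consider is to reduce to a compressed family via a shifting argument, which preserves both the intersecting property and the cardinality, and then check directly that only stars are extremal when $n>2k$ strictly.
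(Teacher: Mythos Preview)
The paper does not actually prove this theorem: it is stated with attribution to Erd\H{o}s, Ko and Rado and immediately followed by the remark that several proofs (including Katona's) exist in the literature. So there is no ``paper's own proof'' to compare against; the theorem is used only as motivation.

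Your Katona-style argument for the bound is correct. The counting that each $k$-subset occurs as a cyclic interval in exactly $n\cdot k!(n-k)!$ of the $n!$ linear arrangements is right, the pairing of the shift by $+j$ with the shift by $-(k-j)$ really does produce two disjoint intervals once $n\ge 2k$, and the double count gives $|\mathcal{F}|\le\binom{n-1}{k-1}$ as stated.

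For the characterization when $n>2k$, what you have written is a plan rather than a proof. You correctly identify that equality forces every arrangement meeting $\mathcal{F}$ to contribute exactly $k$ consecutive intervals, and that deducing a common element from this rigidity is the real work. The shifting alternative you mention is standard and does succeed, but note that shifting alone does not immediately give the uniqueness statement: one must argue that a shifted extremal family is a star \emph{and} that the only intersecting families that shift to a star are themselves stars (or, more commonly, run a separate stability argument). If you intend to submit a complete proof, you should commit to one route and carry it through; as written, the ``Moreover'' clause is not yet established.
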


There are several proofs and extensions of Theorem~\ref{EKR} in the
literature \cite{erdos1961intersection, katona1972simple, furedi2006proof,
  deza1983erdos, Frankl1977maximum, deza1978intersection,
  ellis2012setwise, ellis2011intersecting}.
In particular, Deza and Frankl \cite{Frankl1977maximum} extended
Theorem~\ref{EKR} for permutations. A family of permutations
$\mathcal{F} \subseteq \sym(n)$ is called $t$-pointwise intersecting if for any
$\sigma,\tau \in \mathcal{F}$, there exists
$i_1,i_2,\ldots,i_t \in \{1,2,\dots,n\}$ such that $\sigma(i_\ell) = \tau(i_\ell)$, for
all $\ell \in \{1,2,\ldots,t\}$. It is proven in~\cite{Frankl1977maximum}
that if $\mathcal{F}$ is a family of $1$-pointwise intersecting permutations of
the symmetric group $\sym(n)$, then $|\mathcal{F}| \leq (n-1)!$. In 2003, Cameron and Ku
\cite{cameron2003intersecting}, independently Larose and Malvenuto
\cite{larose2004stable}, proved that the only intersecting families of
permutations meeting the bound are cosets of a stabilizer of a
point. In 2009, Godsil and Meagher~\cite{godsil2009new} gave an algebraic
proof of this result; the work in this paper uses a similar algebraic approach.

% pointwise property
We can also consider intersecting families of permutations from a
specific permutation group, rather than all of $\sym(n)$. For an arbitrary $t$, we say that a permutation
group $G$ has the \textsl{$t$-pointwise intersecting} property if any
family $\mathcal{F}$ of $t$-pointwise intersecting permutations, is no
larger than the maximum size of a pointwise stabilizer of a $t$-set.
Deza and Frankl's result~\cite{Frankl1977maximum} proves that
$\sym(n)$ has the $t$-ponitwise intersecting property.

%define setwise intersection
A natural extension of this type of result is to consider the setwise
action of the permutations. The family of permutations $\mathcal{F}$
is called \textsl{$t$-setwise intersecting} if for any $\sigma,\tau
\in \mathcal{F}$, there exists $S\in \binom{[n]}{t}$ such that
$S^\sigma = S^{\tau}$. The stabilizer of a $t$-set is an example of a
$t$-setwise intersecting family of permutations.  We say that a
permutation group $G$ has the \textsl{$t$-setwise intersecting}
property if any family $\mathcal{F}$ of $t$-setwise intersecting
permutations, is no larger than the maximum size of a setwise
stabilizer of a $t$-set.

Note that $1$-pointwise intersecting and $1$-setwise intersecting are
equivalent, so we simply call this property intersecting.

%set and point for sym(n)
 In particular, $\sym(n)$ has the $t$-setwise (resp. $t$-pointwise)
intersecting property if for any family $\mathcal{F}$ of $t$-setwise
(resp. $t$-pointwise) intersecting permutations, $|\mathcal{F}| \leq
(n-t)!t!$ (resp. $|\mathcal{F}| \leq (n-t)!$).

It was also conjectured in \cite{Frankl1977maximum} that
for $n$ sufficiently large with respect to $t$, a $t$-pointwise intersecting
family $\mathcal{F}$ of $\sym(n)$ is such that
$|\mathcal{F}| \leq (n-t)!$. This conjecture was proved by Ellis et
al. \cite{ellis2011intersecting} using spectral methods and the
representation of the symmetric group.  

\begin{thm}[Ellis, Friedgut, Pilpel \cite{ellis2011intersecting}]
  For $n$ sufficiently large with respect to $t$, if a family of
  permutations $\mathcal{F}$ of $\sym(n)$ is $t$-pointwise intersecting,
  then $|\mathcal{F}| \leq (n-t)!$. Moreover,
  $|\mathcal{F}| = (n-t)!$ if and only if $\mathcal{F}$ is a coset
  of the stabilizer of $t$ elements from $\{1,2,\dots,n\}$.
\end{thm}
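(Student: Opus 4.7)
My plan is to reformulate the theorem as an independent set problem on a normal Cayley graph on $\sym(n)$ and attack it with Hoffman's ratio bound together with character-theoretic estimates for $\sym(n)$. Let $\Gamma_{n,t}$ denote the Cayley graph on $\sym(n)$ whose connection set $C$ is the set of permutations fixing fewer than $t$ points. Since two permutations $\sigma,\tau$ are $t$-pointwise intersecting if and only if $\sigma\tau^{-1}$ fixes at least $t$ points, a $t$-pointwise intersecting family is exactly an independent set in $\Gamma_{n,t}$. The pointwise stabilizer of a $t$-set is such an independent set of size $(n-t)!$, so the goal is to prove this is the independence number for $n$ large.

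Because $C$ is a union of conjugacy classes, $\Gamma_{n,t}$ is normal, and hence its eigenvalues are indexed by partitions $\lambda\vdash n$ and given by
\[
\eta_\lambda \;=\; \frac{1}{f^\lambda}\sum_{\sigma\in C}\chi^\lambda(\sigma),
\]
where $\chi^\lambda$ is the irreducible character of $\sym(n)$ associated to $\lambda$ and $f^\lambda=\chi^\lambda(e)$; the degree is $d=|C|=\eta_{(n)}$. First I would pin down the candidate least eigenvalue: I expect $\lambda_{\min}$ to be realised on partitions of shape $(n-s,\mu)$ with $\mu\vdash s\le t$ and to equal $-d/(n!/(n-t)!-1)$, which is the exact value that makes Hoffman's ratio bound $\dfrac{-n!\,\lambda_{\min}}{d-\lambda_{\min}}$ equal to $(n-t)!$. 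Computing these particular eigenvalues should be tractable by the Murnaghan--Nakayama rule, or by recognising the associated permutation modules on ordered/unordered $t$-tuples.

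The core technical obstacle is showing that $|\eta_\lambda|\le|\lambda_{\min}|$ for \emph{every} other partition $\lambda$. Because $d$ is close to $n!$ while $|\lambda_{\min}|$ is roughly $d/(n(n-1)\cdots(n-t+1))$, the needed bound on $|\eta_\lambda|$ is of order $d/n^t$ or smaller, which cannot be obtained from a crude triangle inequality and must come from uniform bounds on character ratios $|\chi^\lambda(\sigma)|/f^\lambda$ as $\sigma$ ranges over permutations with few fixed points. This is precisely where the hypothesis that $n$ is large relative to $t$ enters: estimates of Roichman, Fomin--Lulov, or Larsen--Shalev give decay of these ratios that is strong enough only once $n/t$ is large. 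Combining such a ratio estimate with a count of conjugacy classes of permutations of each cycle type should close the bound.

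For the characterization of equality, I would invoke the equality case of Hoffman's bound: the characteristic vector of an extremal family $\mathcal F$ must lie in the sum of the all-ones eigenspace and the $\lambda_{\min}$-eigenspace, which coincides with the isotypic sum of the ``shallow'' irreducibles identified in the second step. The indicators of cosets of pointwise stabilizers of $t$-sets already lie in this subspace and span a subspace of the same dimension; a straightforward argument on $0/1$-valued vectors of prescribed weight in this subspace (analogous to the $t=1$ argument of Godsil and Meagher) then forces $\mathcal F$ to be such a coset, which is the desired uniqueness statement.
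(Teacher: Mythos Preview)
The paper does not prove this theorem; it is quoted from Ellis, Friedgut and Pilpel as background, and the paper's own contribution is the $t=2$ case (Theorems~\ref{main-thm} and~\ref{main-pointwise}).

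Your overall framework---Cayley graph, Hoffman's ratio bound, character estimates---is indeed the one used by Ellis--Friedgut--Pilpel, but there is a real gap in the plan. You expect the \emph{unweighted} Cayley graph $\Gamma_{n,t}$ to have least eigenvalue exactly $-d/\bigl(n(n-1)\cdots(n-t+1)-1\bigr)$, attained simultaneously on all ``shallow'' partitions $\lambda$ with $\lambda_1\ge n-t$. This fails already for $t=2$: the eigenvalue afforded by $\chi^{[n-1,1]}$ on $\Gamma_{n,2}$ is $-D_n/(n-1)$, whereas the target value is $-(D_n+nD_{n-1})/(n(n-1)-1)$, and a short computation using $D_n=(n-1)(D_{n-1}+D_{n-2})$ shows these differ. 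In general the various shallow irreducibles do not give a common eigenvalue on the unweighted graph, so Hoffman's bound applied to $\Gamma_{n,t}$ does not produce $(n-t)!$ exactly. The fix, both in Ellis--Friedgut--Pilpel and in this paper's treatment of $t=2$ (see the matrices~\eqref{w-mat} and~\eqref{w-mat-pointwise}), is to work with a \emph{weighted} adjacency matrix in the conjugacy class scheme, with weights chosen precisely so that every non-trivial constituent of the permutation module gets eigenvalue $-1$. Constructing such a weighting and then controlling the remaining eigenvalues is where the work lies; the character-ratio bounds you mention are used only after the weighting is in place.

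Your uniqueness sketch is also too optimistic. Knowing that the characteristic vector of an extremal family lies in the isotypic sum of the shallow modules does not, by a ``straightforward'' $0/1$-vector argument, force it to be a coset indicator once $t\ge 2$; in Ellis--Friedgut--Pilpel this step requires a substantial stability/combinatorial argument, and the present paper explicitly leaves the corresponding characterization for $t=2$ as an open problem (see Section~\ref{further-work}).
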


There is a similar result for $t$-setwise intersection. It was proved
in 2011 by Ellis \cite{ellis2012setwise}.

\begin{thm}[Ellis, \cite{ellis2012setwise}]
  For $n$ sufficiently large with respect to $t$, if a family of
  permutations $\mathcal{F}$ of $\sym(n)$ is $t$-setwise intersecting,
  then $|\mathcal{F}| \leq t!(n-t)!$. Moreover,
  $|\mathcal{F}| = t!(n-t)!$ if and only if $\mathcal{F}$ is a coset
  of a stabilizer of a $t$-subset of $\{1,2,\dots,n\}$.\label{set-ellis}
\end{thm}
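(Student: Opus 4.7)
The plan is to model the problem as an independent set problem in a Cayley graph on $\sym(n)$ and to apply Hoffman's ratio bound. Let $C$ be the set of permutations that fix no $t$-subset of $\{1,2,\dots,n\}$ setwise, and define $\Gamma := \cay(\sym(n), C)$. Two permutations $\sigma,\tau$ are $t$-setwise intersecting precisely when $\sigma\tau^{-1}\notin C$, so a $t$-setwise intersecting family is exactly a coclique of $\Gamma$. Because $C$ is closed under conjugation, $\Gamma$ is a normal Cayley graph and its eigenvalues are indexed by the irreducible characters $\chi$ of $\sym(n)$ via
\[
\xi_\chi \;=\; \frac{1}{\chi(1)}\sum_{\pi\in C}\chi(\pi),
\]
each with multiplicity $\chi(1)^2$.

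Next I would apply Hoffman's ratio bound: for a $d$-regular graph on $N$ vertices with least eigenvalue $\tau$, one has $\alpha(\Gamma)\leq N/(1-d/\tau)$. The valency $d=|C|$ can be written through inclusion--exclusion over the $t$-subsets that could be fixed setwise. For $n$ sufficiently large with respect to $t$, one expects the least eigenvalue $\tau$ to be realized by an irreducible representation arising in the Young permutation module $M^{(n-t,t)}$ associated with the setwise stabilizer $\sym(n-t)\times \sym(t)$. An asymptotic analysis of the character sums over $C$, using the Murnaghan--Nakayama rule together with uniform character bounds (e.g.\ Roichman-type estimates), should yield that the Hoffman bound equals exactly $t!(n-t)!$, matching the size of a setwise stabilizer of a $t$-set.

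For the uniqueness of extremal families, invoke the tightness condition of Hoffman's bound: if $|\mathcal{F}|=t!(n-t)!$, then the characteristic vector of $\mathcal{F}$ lies in the direct sum of the trivial eigenspace and the $\tau$-eigenspace. Identifying the latter as the span of indicator vectors of cosets of setwise stabilizers of $t$-subsets, a $\{0,1\}$-valued vector in this span must be the indicator of a single such coset. This forces $\mathcal{F}$ to be a coset of the setwise stabilizer of some $t$-subset of $\{1,2,\dots,n\}$.

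The main obstacle is the spectral analysis. The connection set $C$ is defined by a global combinatorial condition and decomposes into many conjugacy classes, so the character sum $\sum_{\pi\in C}\chi(\pi)$ is considerably more delicate to estimate than in the ordinary derangement graph (the $t=1$ case). Establishing the correct value of $\tau$, and then describing its eigenspace precisely enough to recover the stabilizer cosets as the only extremal $\{0,1\}$-vectors, is the technical heart of the argument; it is precisely this step that imposes the hypothesis ``$n$ sufficiently large'' in Ellis's proof and that the present paper must circumvent in the case $t=2$.
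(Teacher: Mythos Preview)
The paper does not supply its own proof of this theorem: it is quoted as Ellis's result \cite{ellis2012setwise}, and the paper then proves the case $t=2$ by a different (and more explicit) argument. So there is no ``paper's proof'' to compare against here; the relevant comparison is with Ellis's original argument and with the $t=2$ method used later in this paper.

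Your outline is in the right spirit---normal Cayley graph, eigenvalues indexed by irreducible characters, ratio bound, equality case via the eigenspace---but the plan as written has a substantive gap. You propose to apply Hoffman's bound to the \emph{unweighted} adjacency matrix of $\Gamma=\cay(\sym(n),C)$ and then hope that the least eigenvalue lands on the constituents of the permutation module $M^{(n-t,t)}$. This is exactly what one cannot establish directly: the connection set $C$ is a union of many conjugacy classes, and there is no reason the raw least eigenvalue should hit $-d/\bigl(\binom{n}{t}-1\bigr)$ on the nose. In Ellis's proof (as in Ellis--Friedgut--Pilpel for the pointwise case, and as in Sections~\ref{proof-thm-setwise}--\ref{point-proof} of this paper for $t=2$) one instead \emph{constructs a weighted adjacency matrix} $A=\sum_\rho \omega_\rho A_\rho$, supported on a carefully chosen subset of derangement classes, and solves for the weights $\omega_\rho$ so that the eigenvalues on $\chi^{[n]}$ and on the nontrivial constituents of the relevant permutation module are forced to the prescribed values, while all remaining eigenvalues are then shown to lie strictly between them. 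The ``$n$ sufficiently large'' hypothesis enters precisely in controlling those remaining eigenvalues, not in a Roichman-type estimate of the full character sum over $C$.

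Your uniqueness sketch also skips the hard part. Knowing that the characteristic vector of an extremal family lies in the sum of the trivial module and the $\tau$-eigenspace does not immediately identify that eigenspace with the span of coset indicators, nor does it force a $\{0,1\}$-vector in that span to be a single coset indicator; Ellis devotes a separate combinatorial/stability argument to this step. In short: replace ``apply Hoffman to $\Gamma$'' with ``design a weighted $A$ in the conjugacy class scheme and apply Theorem~\ref{thm:wtRatio}'', and treat the characterization of equality as a genuinely separate problem.
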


The proof of Theorem~\ref{set-ellis} uses similar arguments to the
proof of the Deza-Frankl conjecture in~\cite{ellis2011intersecting}.
In both proofs the result holds for $n$ sufficiently large relative to
$t$ and exact bounds for $n$ are not given. It is conjectured that for
$t$-pointwise intersection the correct lower bound on $n$ is $2t+1$,
while for $t$-setwise intersection it is conjectured
in~\cite{ellis2012setwise} that the result holds for all $n \geq t$.

%%%%%%%%%%%%%
In this paper we will prove that the conjectured exact lower bound on $n$
hold in both cases for $t=2$. We also give a characterization of the
sets that meet the bound---before we can describe this
characterization, we need to define some terms.. 

%%%%%%%%%%%%%
The {\it regular module} of $\sym(n)$ is the complex vector space with
basis $\sym(n)$---the elements of this module can be thought of as
vectors of length $|\sym(n)|$. For example, the \textsl{characteristic
  vector} for a subset $S \subset \sym(n)$ is the length-$|\sym(n)|$
vector with the $g$-entry equal to 1 if $g \in S$ and 0 otherwise is a
vector in this module. This module can be identified with the vector
space $\C[\sym(n)]$ which has the structure of a left
$\C[\sym(n)]$-module by left multiplication.  So $\C[\sym(n)]$ can
also be identified with a subalgebra of the $|\sym(n)| \times
|\sym(n)|$-matrices.
 
It is well-known that each irreducible character of $\sym(n)$
corresponds to a partition $\lambda \vdash n$ (denoted by
$\chi^\lambda$) For each such irreducible character, let $E_\lambda$
be the $|\sym(n)| \times |\sym(n)|$-matrix with the $(g,h)$-entry
equal to $\chi^\lambda(hg^{-1})$. We call the image $E_\lambda$
(considered as a linear operator on $\C[\sym(n)]$) the
$\lambda$-module and denote if by $V_\lambda$.

In~\cite{godsil2009new} it is shown that the characteristic vector of
any maximum intersecting set in $\sym(n)$ is a vector in $V_{[n]} \oplus V_{[n-1,1]}$.
In general, it $G$ is any 2-transitive group, then the permutation module is the
sum of two irredicuble representations; the trivial representation and
one denoted by $\phi$. In~\cite{MeagherSin} it is shown that for any
2-transitive group and the characteristic vector of any maximum
intersecting set of permutations $S$ in $G$ lies in the sum of the
trivial and the $\phi$-module. This is call the \textsl{EKR-module
  property}. 

In this paper, we will show that the characteristic vectors of the maximum
2-setwise and 2-pointwise intersecting sets in $\sym(n)$ each lie in a
specific module. We will use spectral methods and the representation
theory of the symmetric group in our proof for Theorem~\ref{main-thm}
and Theorem~\ref{main-pointwise}.

We state our first theorem on the $2$-setwise
action as follows.

\begin{thm}
	Let $n\geq 2$. If $\mathcal{F}$ is a family of $2$-setwise
        intersecting permutations, then $|\mathcal{F}| \leq
        2(n-2)!$. Moreover, for $n\geq 4$, if $S$ is a maximum $2$-setwise
        intersecting family, then its charactersitic vector $\chi_S$
        is in  $V_{[n]} \oplus V_{[n-1,1]} \oplus V_{[n-2,2]}$.\label{main-thm}
\end{thm}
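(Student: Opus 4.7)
The plan is to model the problem as finding a maximum coclique in the Cayley graph $\Gamma=\cay(\sym(n),D)$ with connection set
$$D=\{\pi\in\sym(n):S^\pi\neq S\text{ for every }S\in\tbinom{[n]}{2}\}.$$
Since a permutation setwise fixes $\{i,j\}$ exactly when $\pi|_{\{i,j\}}$ is the identity or the transposition, $D$ consists of the permutations whose cycle type has no 2-cycles and at most one fixed point. Two permutations are 2-setwise intersecting iff their ratio lies outside $D$, so 2-setwise intersecting families are exactly the cocliques of $\Gamma$. Because $D$ is conjugation-invariant, $\Gamma$ is a normal Cayley graph with eigenvalues
$$\eta_\lambda=\frac{1}{\chi^\lambda(1)}\sum_{\pi\in D}\chi^\lambda(\pi),\qquad\lambda\vdash n,$$
and the module $V_\lambda$ lies in the $\eta_\lambda$-eigenspace.

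The main tool for the size bound is a (possibly weighted) Hoffman ratio bound. Using $\chi^{[n-1,1]}=\fix-1$ and the Murnaghan-Nakayama rule for $\chi^{[n-2,2]}$, one computes these two eigenvalues explicitly; a short algebraic manipulation shows that the target $2(n-2)!$ corresponds to the smallest eigenvalue being $-2|D|/((n-2)(n+1))$, the common value of $\eta_{[n-1,1]}$ and $\eta_{[n-2,2]}$. The unweighted ratio bound is however often loose, because for many $n$ other eigenvalues (for instance $\eta_{[n-1,1]}$ picks up extra negative mass from the $[3,3]$-type conjugacy class when $n\ge 6$) are strictly smaller. The remedy is to use a weighted pseudo-adjacency matrix $A_w=\sum_C w_C A_C$, summed over conjugacy classes $C\subseteq D$ with real weights $w_C$ chosen so that (i) the eigenvalues on $V_{[n-1,1]}$ and $V_{[n-2,2]}$ both equal the target $\tau$, and (ii) every other eigenvalue of $A_w$ is at least $\tau$. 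The existence of such weights reduces to a finite-dimensional linear feasibility problem; the weighted Hoffman bound then yields $|\mathcal{F}|\le 2(n-2)!$.

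The module characterization follows from the equality condition of the weighted Hoffman bound, which places $\chi_S$ in $V_{[n]}$ plus the smallest-eigenvalue eigenspace. The main obstacle is ensuring that this eigenspace is exactly $V_{[n-1,1]}\oplus V_{[n-2,2]}$: the natural obstructions are sign-twisted modules such as $V_{[2,1^{n-2}]}$ and $V_{[1^n]}$, whose characters differ from $\chi^{[n-1,1]}$ and $\chi^{[n]}$ by the sign character and therefore often share the smallest eigenvalue on $D$. These must be excluded either by a careful weight choice, or by an orthogonality argument using that the extremal family may be assumed (up to translation) to be a coset of the setwise stabilizer $H$ of $\{1,2\}$, together with the branching identity $\mathrm{Ind}_H^{\sym(n)}\mathbf{1}=V_{[n]}\oplus V_{[n-1,1]}\oplus V_{[n-2,2]}$. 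The small cases $n=2,3$ are verified by inspection, and $n=4$ is handled by the clique-coclique bound using an explicit clique of size $\ge 5$ in $\Gamma$.
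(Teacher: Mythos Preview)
Your overall plan is the same as the paper's: model the problem as a coclique problem in a normal Cayley graph and apply a weighted ratio (Hoffman) bound to a matrix in the conjugacy-class scheme. That part is fine.

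However, there is a genuine gap. You write that ``the existence of such weights reduces to a finite-dimensional linear feasibility problem''. For a \emph{fixed} $n$ this is true, but the theorem is for all $n\ge 2$, so you must exhibit weights that work uniformly in $n$; asserting feasibility is not a proof. The paper does exactly the work you are skipping: it restricts to the four classes $(n)$, $(n-1,1)$, $(n-3,3)$, $(n-4,3,1)$, parametrizes the solutions to the three linear equations (for $\chi^{[n]}$, $\chi^{[n-1,1]}$, $\chi^{[n-2,2]}$) by two free variables $(s,t)$, and then proves that for all $(s,t)$ in an explicit nonempty polytope $\mathcal{P}$ every other eigenvalue lies strictly in $(-1,\binom{n}{2}-1)$. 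The key trick making this work is that on these four specific classes every irreducible character takes values in $\{-1,0,1\}$, so for any $\chi$ with $\chi(\id)>\binom{n}{2}$ the triangle inequality immediately gives $|\xi_\chi|<1$; only the eight low-dimensional characters from Lemma~\ref{low-dim} must then be checked by hand, and the polytope is engineered precisely so that these (in particular the sign-twisted ones $[1^n]$, $[2,1^{n-2}]$, $[2^2,1^{n-4}]$, $[3,1^{n-3}]$) give eigenvalues strictly above $-1$. None of this is automatic, and it is the actual content of the proof.

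Second, your proposed alternative route for excluding the sign-twisted modules --- assuming ``the extremal family may be assumed (up to translation) to be a coset of the setwise stabilizer $H$'' and then using $\mathrm{Ind}_H^{\sym(n)}\mathbf{1}=V_{[n]}\oplus V_{[n-1,1]}\oplus V_{[n-2,2]}$ --- is circular. The module statement you are trying to prove is strictly weaker than knowing the extremal families are cosets of $H$; you cannot assume the latter. The only way the paper (and you) can get the module conclusion is via the equality case of the weighted ratio bound with weights chosen so that $-1$ is attained \emph{only} by $\chi^{[n-1,1]}$ and $\chi^{[n-2,2]}$, which again forces you to do the eigenvalue analysis above.
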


Our second theorem is a similar result for
$2$-pairwise intersecting permutations. We state it as follows.

\begin{thm}
  Let $n\geq 5$. If $\mathcal{F}$ is a $2$-pointwise intersecting family of
  permutations, then $|\mathcal{F}| \leq (n-2)!$. In addition, if $S$
  is a maximum $2$-pointwise intersecting family of $\sym(n)$, then
  $\chi_S$ in $V_{[n]} \oplus V_{[n-1,1]} \oplus V_{[n-2,2]} \oplus V_{[n-2,1,1]}$.\label{main-pointwise}
\end{thm}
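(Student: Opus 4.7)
My plan is to model the problem as an independent set problem on a Cayley graph on $\sym(n)$ and apply a spectral, representation-theoretic argument. Let $\Gamma_p = \cay(\sym(n), C)$ with connection set $C = \{\pi \in \sym(n) : \fix(\pi) \leq 1\}$; then $\sigma$ and $\tau$ are $2$-pointwise intersecting if and only if $\tau^{-1}\sigma \notin C$, so $2$-pointwise intersecting families are precisely the cocliques of $\Gamma_p$. Since $C$ is a union of conjugacy classes, $\Gamma_p$ is a normal Cayley graph and each module $V_\lambda$ is an eigenspace with eigenvalue
\[
\eta_\lambda = \frac{1}{f^\lambda}\sum_{\pi \in C}\chi^\lambda(\pi),
\]
where $f^\lambda = \dim V_\lambda$.

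Using the character identities $\chi^{[n-1,1]}(\pi) = \fix(\pi)-1$, $\chi^{[n-2,2]}(\pi) = \binom{\fix(\pi)}{2}+c_2(\pi)-\fix(\pi)$, and $\chi^{[n-2,1,1]}(\pi) = \binom{\fix(\pi)-1}{2}-c_2(\pi)$ (where $c_2(\pi)$ counts the $2$-cycles of $\pi$), one can evaluate the eigenvalues on the partitions $[n], [n-1,1], [n-2,2], [n-2,1,1]$ in closed form. The naive unweighted Hoffman ratio bound on $\Gamma_p$ does not yield $(n-2)!$ (for $n=5$ the minimum eigenvalue is $-11$ while $|C|=89$, giving only $\lfloor 13.2\rfloor$), so I would replace the adjacency matrix by a weighted Cayley adjacency $M = \sum_c w_c A_c$ supported on $C$. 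The weights are chosen by solving a small linear system forcing $V_{[n-1,1]}, V_{[n-2,2]}, V_{[n-2,1,1]}$ all to lie in the minimum eigenspace of $M$ with common eigenvalue $-d_w/(n^2-n-1)$, where $d_w$ is the row sum of $M$; the resulting weighted ratio bound is then tight at $n!/(n(n-1)) = (n-2)!$.

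Two further tasks remain. First, I must verify that for every partition $\lambda$ with first part at most $n-3$ the weighted eigenvalue $\mu_\lambda$ stays strictly above this threshold; this requires uniform control of $|\sum_{\pi \in C}\chi^\lambda(\pi)|/f^\lambda$, along the lines of the character bounds in~\cite{ellis2011intersecting}, and is what forces the hypothesis $n\geq 5$. Second, the sign-twisted irreducibles---notably $V_{[1^n]}$ and $V_{[2,1^{n-2}]}$---tend to tie the weighted minimum eigenvalue, so the $\lambda_{\min}(M)$-eigenspace is typically strictly larger than the target $V_{[n-1,1]}\oplus V_{[n-2,2]}\oplus V_{[n-2,1,1]}$. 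Equality in the Hoffman bound then only places $\chi_S$ in the span of $V_{[n]}$ and this possibly-too-large eigenspace, and I must argue that the projections of $\chi_S$ onto the spurious sign-twisted modules vanish: the expected argument is that any extremal family of size $(n-2)!$ is a disjoint union of cosets of the pointwise stabilizer of a $2$-set, which for $n\geq 5$ contains equally many even and odd permutations, killing the sign component and its twisted counterparts.

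The main obstacle is this last step: trimming the $\lambda_{\min}(M)$-eigenspace down to exactly $V_{[n-1,1]}\oplus V_{[n-2,2]}\oplus V_{[n-2,1,1]}$, so that equality in the weighted Hoffman bound forces $\chi_S$ into the module $V_{[n]}\oplus V_{[n-1,1]}\oplus V_{[n-2,2]}\oplus V_{[n-2,1,1]}$ claimed in Theorem~\ref{main-pointwise}. This step has no analogue in the $1$-pointwise case solved by Godsil and Meagher, and is what ultimately links the algebraic ratio bound to the combinatorial structure of cosets of pointwise $2$-set stabilizers.
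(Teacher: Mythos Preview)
Your framework is correct and matches the paper's: encode $2$-pointwise intersecting families as cocliques in a normal Cayley graph, build a weighted adjacency matrix in the conjugacy-class scheme, force the eigenvalues on $\chi^{[n-1,1]},\chi^{[n-2,2]},\chi^{[n-2,1,1]}$ to equal the minimum, and apply the weighted ratio bound. Where you diverge is in how you handle the sign-twisted representations, and this is a genuine gap.

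You treat the tie with $V_{[1^n]}$, $V_{[2,1^{n-2}]}$, etc.\ as essentially unavoidable and propose to remove those components \emph{a posteriori} by arguing that any extremal $S$ is a union of cosets of a pointwise $2$-set stabilizer. That argument is circular: the standard route to such a structural statement passes \emph{through} the module property you are trying to prove, and in fact the paper leaves the full characterization of extremal families open. So your trimming step, as stated, does not stand on its own.

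The paper avoids the tie altogether, and this is the idea you are missing. Rather than weighting all of $C$, it puts nonzero weight on only a handful of conjugacy classes with very few cycles, namely $(n),(n-1,1),(n-2,2),(n-3,3),(n-3,2,1),(n-4,3,1)$. On each of these classes every irreducible character value lies in $\{-1,0,1\}$ (Murnaghan--Nakayama), so for any $\lambda$ with $f^\lambda>2\binom{n+1}{2}$ the triangle inequality alone gives $|\xi_\lambda|<1$. This replaces the delicate Ellis--Friedgut--Pilpel character estimates by a one-line bound. The linear system forcing $\xi_{[n-1,1]}=\xi_{[n-2,2]}=\xi_{[n-2,1,1]}=-1$ then still has two or three free parameters; the paper shows there is a nonempty polytope of parameter values (different for $n$ even and $n$ odd) on which the eight low-dimensional characters, including $[1^n]$, $[2,1^{n-2}]$, $[2^2,1^{n-4}]$, $[3,1^{n-3}]$, have eigenvalue \emph{strictly} greater than $-1$. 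Thus the $-1$-eigenspace is exactly $V_{[n-1,1]}\oplus V_{[n-2,2]}\oplus V_{[n-2,1,1]}$ by construction, and no post-hoc combinatorial argument is needed. Small cases $5\le n\le 12$ are handled by direct computation.
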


We organize this paper as follows. In Section~\ref{prem} and Section~\ref{representations}, we recall
some basics on the symmetric group $\sym(n)$ and combinatorial objects
called weighted adjacency matrix.  Section~\ref{proof-thm-setwise} is
devoted to the proof of Theorem~\ref{main-thm}. In
Section~\ref{point-proof}, we give the proof for
Theorem~\ref{main-pointwise}.
 
\section{Background}\label{prem}

\subsection{Derangement graphs}

One of the techniques to prove the EKR property for a group is to use
the \emph{derangement graph.} The set of derangements of a permutation
group $G\leq \sym(n)$ is the set of all permutations of $G$ without
fixed points. We denote by $\Der(G)$ the set of all derangements of
the permutation group $G$. The derangement graph $\Gamma_G$ is the
undirected graph with vertex set $G$, where two permutations $g,h\in
G$ are adjacent if and only if $hg^{-1}\in \Der(G)$. For the case
where $G$ is symmetric group we denote $\Gamma_{sym(n)}$ by
$\Gamma_n$.

If $\mathcal{F}$ is an intersecting family of $G$, then in $\Gamma_G$,
the elements of $\mathcal{F}$ form an \emph{independent set} or
\emph{coclique}. Therefore, a transitive group has the EKR property if and only
if the size of a maximum coclique is at most $\frac{|G|}{n}$.

Given a graph $X$, we denote by $\alpha(X)$ and $\omega(X)$
respectively the maximum size of a coclique and maximum size of a
clique of $X$. The following result is well-known and a proof can be
found in~\cite[Section 2.1]{godsil2016erdos}.

\begin{lem}[Clique-coclique bound]
	Let $X$ be a vertex-transitive graph on $n$ vertices. Then
	\begin{align*}
		\alpha(X) \omega(X) \leq n.
	\end{align*}\label{clique-coclique}
\end{lem}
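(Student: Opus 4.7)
The plan is to use a standard double-counting argument that exploits the vertex-transitivity of $X$. I would let $C$ be a maximum clique and $S$ a maximum coclique of $X$, so $|C| = \omega(X)$ and $|S| = \alpha(X)$, and write $G = \Aut(X)$. By hypothesis $G$ acts transitively on $V(X)$, and the orbit-stabilizer theorem then gives that exactly $|G|/n$ automorphisms send any prescribed vertex $u$ to any prescribed vertex $v$.

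The main step is to count the set $T = \{(g,c) \in G \times C : g(c) \in S\}$ in two different ways. First, I would fix a pair $(c,s) \in C \times S$ and use the observation above to note that there are exactly $|G|/n$ automorphisms $g$ with $g(c) = s$; summing over all $|C| \cdot |S|$ such pairs yields $|T| = |C| \cdot |S| \cdot |G|/n$. Second, I would fix $g \in G$ and observe that $g(C)$ is still a clique (since $g$ is an automorphism), whereas $S$ is a coclique, so $|g(C) \cap S| \leq 1$. Hence for each $g$ there is at most one $c \in C$ with $g(c) \in S$, giving $|T| \leq |G|$.

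Equating the two estimates gives $|C| \cdot |S| \cdot |G|/n \leq |G|$, and cancelling $|G|$ produces the desired inequality $\alpha(X)\omega(X) \leq n$. There is no real obstacle in this argument; the only subtlety worth flagging is that vertex-transitivity is genuinely used so that the fibre count over each pair $(c,s)$ is the constant $|G|/n$, independent of the choice of $(c,s)$. Without transitivity the first count would depend on the individual stabilizers and the clean product bound would fail.
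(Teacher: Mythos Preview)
Your argument is correct and is the standard double-counting proof of the clique--coclique bound. The paper does not actually give its own proof of this lemma but simply cites \cite[Section~2.1]{godsil2016erdos}, where essentially this same averaging argument over $\Aut(X)$ appears.
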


\begin{cor}
	The symmetric group $\sym(n)$ has the EKR property.
\end{cor}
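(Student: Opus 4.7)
The plan is to apply the clique-coclique bound (Lemma~\ref{clique-coclique}) to the derangement graph $\Gamma_n$. Since $\sym(n)$ acts regularly on itself by left multiplication and this action preserves adjacency in $\Gamma_n$ (because $(hk)(gk)^{-1} = hg^{-1}$), the derangement graph is vertex-transitive on $n!$ vertices, so the hypothesis of Lemma~\ref{clique-coclique} is satisfied.

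The first step is to exhibit a clique of size $n$ in $\Gamma_n$. The natural candidate is the cyclic subgroup $C = \langle c \rangle$ where $c = (1,2,\dots,n)$ is the full $n$-cycle. Any two distinct elements of $C$ have the form $c^i$ and $c^j$ with $0 \le i < j \le n-1$, and their quotient is $c^{j-i}$ with $1 \le j-i \le n-1$. Since $c^m$ acts on $\{1,\dots,n\}$ by $k \mapsto k+m \pmod n$, it has a fixed point if and only if $n \mid m$; hence $c^{j-i} \in \Der(\sym(n))$. Thus every pair of distinct elements of $C$ is adjacent in $\Gamma_n$, so $\omega(\Gamma_n) \ge n$.

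The second step is to combine this with Lemma~\ref{clique-coclique}: we have
\begin{equation*}
\alpha(\Gamma_n) \cdot n \le \alpha(\Gamma_n) \cdot \omega(\Gamma_n) \le |\sym(n)| = n!,
\end{equation*}
so $\alpha(\Gamma_n) \le (n-1)!$. Any intersecting family of permutations of $\sym(n)$ corresponds to a coclique in $\Gamma_n$, so every such family has size at most $(n-1)!$, which is exactly the size of a point-stabilizer. This matches the Deza--Frankl bound and establishes the EKR property.

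There is no real obstacle here; the only thing one needs to verify is that the clique bound is tight enough to match $(n-1)!$, and this is immediate once one observes that a full $n$-cycle generates a transitive cyclic subgroup whose nonidentity elements are all derangements. The point-stabilizer $\{g \in \sym(n) : g(1) = 1\}$ then witnesses that the bound $(n-1)!$ is attained.
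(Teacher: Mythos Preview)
Your proof is correct and follows essentially the same route as the paper: exhibit a clique of size $n$ in $\Gamma_n$ and then apply the clique--coclique bound. The paper phrases the clique as the rows of a Latin square of order $n$, while you use the cyclic subgroup $\langle (1,2,\dots,n)\rangle$, which is simply the special case of the cyclic Latin square; otherwise the arguments are identical.
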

\begin{proof}
  First we observe that $\omega(\Gamma_n)$ is at most the degree,
  which is $n$ in this case. Moreover, the rows of a Latin square of
  order $n$ correspond to a set of permutations that forms a clique of
  size $n$ in $\Gamma_n$. Using Lemma~\ref{clique-coclique}, we have
  \begin{align*} 
   \alpha(\Gamma_n) \leq \frac{n!}{n} = (n-1)!.
  \end{align*} 
Hence, $\sym(n)$ has the EKR property. 
\end{proof}

This approach does not work in general for 2-setwise or 2-pointwise
intersection. To see this, consider $\sym(n)$ with 2-pointwise
intersection; a clique in the corresponding derangement graph would be
a sharply 2-transitive subset of $\sym(n)$. Since such a set does not
exist for all $n$, the clique-coclique bound will not always holds
with equality. Rather we will use the ratio bound, as was used
in~\cite{ellis2011intersecting,
  ellis2012setwise,godsil2015algebraic}. The ratio bond is also known
as the Delsarte bound or Hoffman bound, we refer the reader
to~\cite{godsil2015algebraic} for a proof. We use $\mathbf{1}$ to
denote the all ones vector and $J$ for the all ones matrix (the sizes
will be clear from context).
%change reference
\begin{lem}[Ratio bound]
  Let $X$ be a $d$-regular graph having $n$ vertices. Let
  $\lambda_{\min}$ be the minimum eigenvalue of the adjacency matrix
  of $X$. Then we have
	\begin{align*}
		\alpha(X) \leq \frac{n}{1-\frac{d}{\lambda_{\min}}}.
	\end{align*} \label{ratio-bound}
	Moreover, if $S$ is an independent set of size $\frac{n}{1-\frac{d}{\lambda_{\min}}}$ with characteristic vector $v_S$, then 
	\begin{align*}
		v_S-\frac{|S|}{n}\mathbf{1}
	\end{align*}
	is a $\lambda_{\min}$-eigenvector.
\end{lem}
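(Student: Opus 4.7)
The plan is to prove the ratio bound by combining two expressions for the quadratic form $v_S^{T} A v_S$ where $A$ is the adjacency matrix of $X$ and $v_S$ is the characteristic vector of an independent set $S$ of size $s$. The key idea is to decompose $v_S$ along the eigenspaces of $A$, use regularity to pin down the component in the direction of $\mathbf{1}$, and then bound the remaining component using the minimum eigenvalue.

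First I would record the easy facts: since $X$ is $d$-regular, $A\mathbf{1}=d\mathbf{1}$, so $\mathbf{1}$ is an eigenvector with eigenvalue $d$. Write $v_S = \tfrac{s}{n}\mathbf{1} + w$ with $w \perp \mathbf{1}$. Since $v_S$ is a $0/1$-vector, $\|v_S\|^{2}=s$, and the orthogonal decomposition gives $\|w\|^{2} = s - s^{2}/n$. Since $S$ is independent, no edge of $X$ has both endpoints in $S$, so $v_S^{T} A v_S = 0$. On the other hand, expanding along the decomposition,
\begin{align*}
  v_S^{T} A v_S = \tfrac{s^{2}}{n^{2}}\mathbf{1}^{T}A\mathbf{1} + w^{T}A w = \tfrac{s^{2} d}{n} + w^{T} A w,
\end{align*}
so $w^{T}A w = -\tfrac{s^{2}d}{n}$.

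Now I would use the spectral theorem for $A$. Because $w$ is orthogonal to $\mathbf{1}$, it lies in the span of the eigenspaces of $A$ corresponding to eigenvalues $\lambda \leq \lambda_{2} < d$, and hence $w^{T} A w \geq \lambda_{\min}\|w\|^{2} = \lambda_{\min}\bigl(s - s^{2}/n\bigr)$. Combining this with the computation above gives
\begin{align*}
  \lambda_{\min}\bigl(s - \tfrac{s^{2}}{n}\bigr) \;\leq\; -\tfrac{s^{2}d}{n}.
\end{align*}
Dividing by $s>0$ and rearranging (being careful that $\lambda_{\min}-d<0$, so the inequality flips) yields $s \leq \tfrac{n}{1 - d/\lambda_{\min}}$, which is the ratio bound.

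For the equality case, equality in $w^{T} A w \geq \lambda_{\min}\|w\|^{2}$ holds if and only if $w$ lies entirely in the $\lambda_{\min}$-eigenspace of $A$, so when $|S|$ meets the bound, $v_S - \tfrac{|S|}{n}\mathbf{1}$ is a $\lambda_{\min}$-eigenvector. The main thing to be careful about is the sign manipulation when dividing by the negative quantity $\lambda_{\min}-d$; apart from that, the argument is a clean two-line computation, and the nontrivial input is just the spectral bound $w^{T}Aw \geq \lambda_{\min}\|w\|^{2}$ on the $\mathbf{1}^{\perp}$-component.
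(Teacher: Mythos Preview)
Your proof is correct. The paper itself does not prove this lemma, referring instead to \cite{godsil2015algebraic}, but it does prove the weighted generalization (Theorem~\ref{thm:wtRatio}) by a slightly different route: rather than decomposing $v_S$ along eigenspaces, it forms the matrix $M = A - \tau I - \frac{d-\tau}{|V(X)|}J$, checks that $M$ is positive semidefinite, and then reads the bound off from $0 \le v_S^T M v_S$. Your approach and the paper's are really two packagings of the same idea---your Rayleigh-quotient inequality $w^T A w \ge \lambda_{\min}\|w\|^2$ on $\mathbf{1}^\perp$ is exactly what makes $M$ positive semidefinite---but the PSD-matrix formulation has the advantage that it carries over verbatim to a weighted adjacency matrix with constant row sum, which is what the paper actually needs. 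One small remark: your aside that $w$ lies in eigenspaces with eigenvalue strictly below $d$ assumes $X$ is connected; this is not needed, since the Rayleigh bound $w^T A w \ge \lambda_{\min}\|w\|^2$ holds for every vector $w$, and orthogonality to $\mathbf{1}$ is only used to kill the cross terms.
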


We will actually use a generalization of the ratio bound which we
state and prove once we give a definition. For a graph $X$ on $n$
vertices, a real symmetric $n \times n$ real matrix $A = (a_{i,j})$ with
constant row and column sum is a \emph{weighted adjacency matrix} (or
a \emph{pseudo-adjacency matrix}) for $X$ if $a_{i,j} = 0$ whenever
$i\not\sim_X j$ (note that $a_{i,j}$ could be 0 for adjacent vertices
$i$ and $j$).

\begin{thm} [Weighted Ratio Bound]~\label{thm:wtRatio} Let $X$ be a
  connected graph.  Let $A$ be a weighted adjacency matrix for $X$ with
  constant row and column sum $d$.

 If the least eigenvalue of $A$ is $\tau$, then
\[
  \alpha(X) \le\frac{|V(X)|}{1-\frac{d}{\tau}}.
\]
Further, if equality holds for some coclique $S$ with characteristic vector
$v_S$, then
\[
v_S -\frac{|S|}{|V(X)|}\one
\]
is an eigenvector with eigenvalue $\tau$. \qed
\end{thm}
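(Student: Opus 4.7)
The plan is to run the standard Rayleigh-quotient argument underlying the Hoffman/Delsarte bound, with the mild modification that $A$ is a weighted adjacency matrix rather than the ordinary one. Given a coclique $S\subseteq V(X)$ with characteristic vector $v_S$, I would set $n=|V(X)|$ and decompose $v_S = \frac{|S|}{n}\one + w$ with $w\perp \one$; this decomposition is natural because $\one$ is itself an eigenvector of $A$ with eigenvalue $d$ by the constant-row-sum hypothesis.

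The key identity to establish first is $v_S^{T} A v_S = 0$. Since $a_{ij}=0$ whenever $i\not\sim_X j$ (in particular for $i=j$, as $X$ has no loops, and for all pairs inside $S$ since $S$ is a coclique), the coclique contributes nothing. Expanding $v_S^T A v_S$ using $A\one = d\one$ and $\one^T w = 0$ produces the clean formula
$$w^{T} A w \;=\; -\frac{|S|^{2}d}{n}.$$
I would then pair this with the Rayleigh inequality $w^{T} A w \ge \tau\|w\|^{2}$, which is valid because $A$ is symmetric with least eigenvalue $\tau$, together with the elementary identity $\|w\|^{2} = \|v_S\|^{2} - |S|^{2}/n = |S| - |S|^{2}/n$.

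Combining the two gives $-|S|^{2}d/n \ge \tau\,|S|(1-|S|/n)$, and rearranging—keeping careful track of signs, since $\tau<0$ (as $\tr(A)=0$ and $A\neq 0$) and $\tau-d<0$—yields the advertised bound $|S| \le n/(1-d/\tau)$. For the equality statement, I would observe that the only inequality in the chain is the Rayleigh step, and equality there is equivalent to $w$ lying in the $\tau$-eigenspace of $A$; hence $v_S - \frac{|S|}{n}\one = w$ is a $\tau$-eigenvector, as claimed. The only subtlety I anticipate is the bookkeeping of signs when dividing by negative quantities, and noting that connectedness of $X$ is used only to rule out the degenerate cases $S=\varnothing$ or $S=V(X)$; otherwise the argument is purely linear-algebraic and never requires $d$ to be the \emph{largest} eigenvalue of $A$.
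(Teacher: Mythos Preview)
Your argument is correct and is essentially the same Hoffman-type proof the paper gives. The only cosmetic difference is packaging: the paper encodes the Rayleigh step by forming the positive semidefinite matrix $M = A - \tau I - \frac{d-\tau}{|V(X)|}J$ and using $v_S^{T} M v_S \ge 0$, which is exactly your orthogonal decomposition $v_S = \frac{|S|}{|V(X)|}\one + w$ together with $w^{T} A w \ge \tau\|w\|^{2}$, and the equality case is handled the same way (PSD plus zero quadratic form forces $Mv_S=0$, equivalently $Aw=\tau w$).
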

\proof
Set $v=|V(X)|$ and $s=|S|$ and denote $A(X)$ by $A$.  Let $\{\one,
w_2,\dots,w_n \}$ be an orthonormal basis of real eigenvectors for $A$
with $Aw_i = \lambda_i$.

Define
\[
	M = A -\tau I -\frac{d-\tau}{v}J.
\]
Then $M \one = 0$, and for $w_i$
\[
Mw_i = (\lambda_i - \tau) w_i
\] 
since $w_i$ is orthogonal to the all ones vector.
Thus all the eigenvalues of $M$ are non-negative, and $M$ is positive
semidefinite. Hence for any vector $x$ 
\begin{equation}\label{ineq:psdtau}
	0 \le x^TMx = x^TAx -\tau x^Tx -\frac{d-\tau}{v}x^TJx.
\end{equation}
Let $x$ be the characteristic vector of a coclique of size $s$, then
$x^TAx=0$ and Equation(~\ref{ineq:psdtau}) simplifies to
\[
0 \le x^TAx -\tau s -\frac{d-\tau}{v}s^2. 
\]
Hence
\[
\tau s \le \frac{d-\tau}{v}s^2
\]
and the inequality follows. If equality holds, then $x^TMx=0$; since $M$ is positive
semidefinite, this implies that $Mx=0$.  Therefore
\begin{equation}
	(A-\tau I)x =\frac{d-\tau}{v} Jx = \frac{s}{v} (d-\tau)\one = (A-\tau I)  \frac{s}{v} \one 
\end{equation}
setting $x = v_S-\frac{s}{v}\one$ implies the second claim.\qed

\subsection{Eigenvalues of normal Cayley graphs}
%make this one section with previous

Let $G$ be a group and let $C$ be an inverse closed subset of
$G\setminus\{1\}$. A Cayley graph $\cay(G,C)$ is graph whose vertex
set is the set $G$, and two group elements $g$ and $h$ are adjacent if
$hg^{-1}\in C$. A Cayley graph is called \emph{normal} if the set $C$
is closed under conjugation. That is, for any $g\in G$, we have
$gCg^{-1} \subseteq C$.  The derangement graph for a group is a Cayley
graph, in particular, $\Gamma_G=\cay(G,\Der(G))$.  Since $\Der(G)$ is
closed under conjugation, the derangement graph is a normal Cayley
graph.  In this subsection, we review some properties of Cayley graphs
and give a formula for the eigenvalues of normal Cayley graphs.

The eigenvalues of a Cayley graph
$\cay(G,C)$ can be obtained from the irreducible
characters of the group $G$. We present this result in the following
lemma whcih is usually attributed to Babai~\cite{babai1979spectra}, or Diaconis and
Shahshahani \cite{MR626813}; a proof may be found in~\cite[Section
11.12]{godsil2009new}.

\begin{lem}[Babai~\cite{babai1979spectra}]
	Let $\operatorname{Irr}(G)$ be the set of all irreducible
        representations on a group $G$. If $X = \cay(G,C)$ is a
        normal Cayley graph, then eigenvalues of the adjacency matrix
        of $X$ are given by
	\begin{align*}
	\xi_{\chi} &= \frac{1}{\chi(\id)} \sum_{g\in C} \chi(g),
	\end{align*}\label{eig-char}

        for $\chi \in \operatorname{Irr}(G)$. The multiplicity of $\xi$ is given by $\sum m_\chi^2$
        where $m_\chi$ is the dimension of $\chi$, and the sum is taken
        over all irreducible representations $\chi$ with $\xi_\chi
        = \xi$.
\end{lem}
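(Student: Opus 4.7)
The plan is to realize the adjacency matrix of $X=\cay(G,C)$ as the image of a central element of the group algebra $\C[G]$ under the left regular representation, and then apply Schur's lemma. First I would form $\widehat{C} = \sum_{c\in C} c \in \C[G]$ and verify that, if the regular representation $\rho_{\mathrm{reg}}$ is written in the basis $\{g:g\in G\}$, the matrix of $\rho_{\mathrm{reg}}(\widehat{C})$ has $(h,g)$-entry equal to $1$ exactly when $hg^{-1}\in C$. This matrix is therefore the adjacency matrix $A(X)$, so the spectrum of $A(X)$, with multiplicities, agrees with the spectrum of $\rho_{\mathrm{reg}}(\widehat{C})$.

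Next I would invoke the decomposition of the regular representation, $\C[G] \cong \bigoplus_{\chi\in\operatorname{Irr}(G)} m_\chi V_\chi$, where $V_\chi$ is the irreducible $G$-module with character $\chi$ and $m_\chi=\chi(\id)=\dim V_\chi$. The normality hypothesis $gCg^{-1}\subseteq C$ for every $g\in G$ says precisely that $C$ is a union of conjugacy classes, so $\widehat{C}$ belongs to the centre $Z(\C[G])$. By Schur's lemma, the image of $\widehat{C}$ in each irreducible $V_\chi$ must be a scalar multiple of the identity, say $\xi_\chi I_{m_\chi}$. Taking the trace on $V_\chi$ gives $m_\chi\xi_\chi = \sum_{c\in C}\chi(c)$, which rearranges to the stated formula $\xi_\chi = \frac{1}{\chi(\id)}\sum_{c\in C}\chi(c)$.

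For the multiplicity statement, the $\chi$-isotypic component of the regular representation has total dimension $m_\chi\cdot\dim V_\chi = m_\chi^2$ and is contained in the $\xi_\chi$-eigenspace of $A(X)$; summing these dimensions over all irreducibles $\chi$ that share a common value of $\xi_\chi=\xi$ gives the multiplicity $\sum m_\chi^2$. The only genuinely delicate point is pinning down conventions — confirming that the left regular representation produces the adjacency matrix for the Cayley graph with the chosen orientation convention $hg^{-1}\in C$, and that $\widehat{C}$ being conjugation-invariant really forces centrality in $\C[G]$. Once those are in place the argument is a clean application of Schur's lemma combined with the standard decomposition of $\C[G]$, so I do not anticipate any substantial obstacle.
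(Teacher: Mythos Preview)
Your argument is correct and is precisely the standard proof of this result. The paper itself does not supply a proof of this lemma; it attributes the statement to Babai and to Diaconis--Shahshahani and refers the reader to \cite[Section~11.12]{godsil2009new} for details. The approach there is the same as yours: identify $A(X)$ with the image of the class sum $\widehat{C}=\sum_{c\in C} c$ under the regular representation, use normality to place $\widehat{C}$ in the centre of $\C[G]$, and then apply Schur's lemma together with the isotypic decomposition of $\C[G]$ to read off both the eigenvalues and their multiplicities. There is nothing to add.
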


We will use a weighted adjacency matrix that is formed by taking a
linear combination of matrices from the conjugacy class association
scheme on $\sym(n)$. Next we show how to
calculate the eigenvalues of such a matrix.

For $\rho$ a partition of $n$, let $C_\rho$ represent the
conjugacy classes of $\sym(n)$ with shape $\rho$, and define the
$n!  \times n!$ matrix
\begin{align*}
A_\rho[g,h] &= \begin{cases}
		1 & \mbox{ if }hg^{-1}\in C_\rho,\\
		0 & \mbox{ otherwise.}
               \end{cases} 
\end{align*}
The set of matrices $\mathcal{A} = \{A_\rho\}_{\rho\vdash n}$ form an
association scheme called the \textsl{conjugacy class scheme} on
$\sym(n)$--- in particular $A_\rho$ is the matrix in this association
scheme corresponding to the conjugacy class of permutations with shape
$\rho$ (see~\cite[Section 3.3]{godsil2016erdos} for details about this
association scheme). For a conjugacy class of derangements
$C_\rho$, the matrix $A_\rho$ is the adjacency matrix of
$\cay(\sym(n),C_\rho)$, and by Lemma~\ref{eig-char} the eigenvalues
of $A_\rho$ are given by
$\frac{|C_\rho|}{\chi(\id)}\chi(g_\rho)$ where $g_\rho \in
C_\rho$ and $\chi$ is an irreducible representation of
$\sym(n)$~\cite{godsil2015algebraic}.  

We will consider weighted adjacency matrices of the derangement
graphs of $\sym(n)$ with the 2-setwise and the 2-pointwise
actions. The adjacency matrices we consider have a constant
weight on each conjugacy class; hence we only consider matrices in the
conjugacy class association scheme for $\sym(n)$. The next result
gives a formula for the eigenvalues of such a weighted adjacency matrix.

\begin{lem}\label{lem:weightedEvalues}
  Let $\{ C_\rho \,|\, \rho \vdash n\}$ be the conjugacy classes of
  $\sym(n)$, let $A_\rho$ be the matrix of the conjugacy classes scheme
  of $\sym(n)$ that corresponds to $C_\rho$. Let $A = \sum_{\rho
    \vdash n}  \omega_\rho A_\rho$, where $(\omega_\rho)_{\rho\vdash n} \subseteq
  \mathbb{R}$. Then the eigenvalues of $A$ are
\[
\xi_\chi = \sum_{\rho \vdash n}\omega_\rho \frac{|C_\rho|}{\chi(\id)}\chi(g_\rho)
\]
where $g_\rho \in C_\rho$ and $\chi$ is an irreducible representation of $\sym(n)$.
\end{lem}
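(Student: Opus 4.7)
The plan is to reduce this directly to Lemma~\ref{eig-char} by observing that the matrices $A_\rho$ are simultaneously diagonalizable and share their eigenspaces with the Cayley graphs associated to individual conjugacy classes.

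First, I would note that for any conjugacy class $C_\rho$, the set $C_\rho$ is (by definition) closed under conjugation, and $C_\rho \subseteq \sym(n) \setminus \{\id\}$ whenever $\rho \ne [1^n]$, so each $A_\rho$ is the adjacency matrix of the normal Cayley graph $\cay(\sym(n), C_\rho)$. Applying Lemma~\ref{eig-char} then gives that the eigenvalues of $A_\rho$ are precisely $\frac{|C_\rho|}{\chi(\id)}\chi(g_\rho)$ as $\chi$ ranges over $\operatorname{Irr}(\sym(n))$, with $g_\rho \in C_\rho$. The case $\rho = [1^n]$ corresponds to $A_\rho = I$, whose eigenvalues are also recovered by the same formula since $\chi(\id) = \chi(\id)$.

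Second, I would recall the standard fact that $\{A_\rho\}_{\rho \vdash n}$ forms the \emph{conjugacy class association scheme} of $\sym(n)$ (as referenced in the paper), which is commutative, so the matrices $A_\rho$ are simultaneously diagonalizable and share a common set of eigenspaces. More concretely, the common eigenspaces are the isotypic components of the regular representation, one for each irreducible character $\chi$, and on the $\chi$-isotypic component the eigenvalue of $A_\rho$ is the scalar $\frac{|C_\rho|}{\chi(\id)}\chi(g_\rho)$ identified in the previous step. This assignment is consistent: the eigenvector structure does not depend on $\rho$, only the scalar does.

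Finally, since $A = \sum_\rho \omega_\rho A_\rho$ is a linear combination of commuting diagonalizable matrices with common eigenspaces, the eigenvalue of $A$ on the $\chi$-isotypic component is simply the corresponding linear combination of scalars,
\[
\xi_\chi = \sum_{\rho \vdash n}\omega_\rho \frac{|C_\rho|}{\chi(\id)}\chi(g_\rho),
\]
which is the claimed formula. The only mildly subtle point is justifying that the $A_\rho$ share the $\chi$-isotypic subspaces as eigenspaces, but this is immediate from the commutativity of the conjugacy class scheme (equivalently, from the fact that each $A_\rho$ lies in the center of $\C[\sym(n)]$ acting by left multiplication on itself, and the center acts as scalars on each irreducible constituent by Schur's lemma). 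I do not anticipate a real obstacle here; the result is essentially a bookkeeping extension of Babai's formula from a single conjugacy class to an arbitrary real linear combination of class sums.
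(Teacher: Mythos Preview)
Your proposal is correct and matches the paper's approach: the paper does not give a separate proof of this lemma but presents it as an immediate consequence of Babai's formula (Lemma~\ref{eig-char}) applied to each $A_\rho$ together with the fact that the $A_\rho$ form the conjugacy class association scheme and hence are simultaneously diagonalizable. Your write-up simply makes explicit the bookkeeping the paper leaves implicit.
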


If the conjugacy class $C_\rho$ is a conjugacy class of
derangements under the action, then it is called a \emph{derangement
  class}.  For the setwise action on 2-sets, a derangement class is
any class $C_\rho$ in which $\rho$ has no parts of size $2$. For
the pointwise action, a derangement class is any class with a
corresponding partition that contains at most one part of size $1$.

\section{Representations of $\sym(n)$}\label{representations}

In this section we state the results on the representation theory of
the symmetric group that we need. We do not prove these results, but
we refer the reader to Sagan \cite{sagan2001symmetric} or another such book.

The degree of a representation is given by the value of the character
on the identity. For the symmetric group, the degree of an irreducible
representation can be computed via the well-known hook length formula.

If $\lambda \vdash n$, then a pair $(i,j)$ is a \emph{node} of the
Young diagram of $\lambda$ if the path downward $j$ cells from the top
leftmost cell and then rightward $i$ cells ends on a cell of
$\lambda$. By $(i,j)\in \lambda$, we mean $(i,j)$ is a node of
$\lambda$.  For a node $(i,j)$ define
\[
H_{i,j} = \left\{ (u,j) \in \lambda \mid i\leq u \right\} \cup
\left\{ (i,v)\in \lambda \mid j \leq v \right\}
\]
and $h_{i,j} = \left| H_{i,j} \right|$.

\begin{thm}[Hook Length Formula~\cite{frame1954hook}] 
  Let $\lambda \vdash n$. The degree of the character $\chi^\lambda$
  (this is the character corresponding to $\lambda$) is
\begin{align*}
\chi^\lambda(\id) = \frac{n!}{\prod_{(i,j) \in \lambda} h_{i,j}}
\end{align*} 
where the product is taken over all nodes in the Young's diagram of $\lambda$.
\end{thm}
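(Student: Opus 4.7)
The plan is to proceed via the standard identification $\chi^{\lambda}(\id) = f^{\lambda}$, where $f^{\lambda}$ denotes the number of standard Young tableaux (SYT) of shape $\lambda$, and then to establish the hook length formula for $f^{\lambda}$ directly. The first step is to recall from the Specht module construction (or equivalently from Young's seminormal form) that the irreducible $\sym(n)$-module $S^{\lambda}$ has a basis indexed by SYT of shape $\lambda$, so its dimension---which equals $\chi^{\lambda}(\id)$---is exactly $f^{\lambda}$. This is the content of Section 2 of Sagan's book, and I would treat it as essentially standard.

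The real content is then showing $f^{\lambda} = n!/\prod_{(i,j)\in\lambda} h_{i,j}$. I would argue by induction on $n$, using the branching recursion
\[
f^{\lambda} \;=\; \sum_{c \text{ a corner of } \lambda} f^{\lambda \setminus c},
\]
which follows by conditioning an SYT on the location of the entry $n$. By the inductive hypothesis, $f^{\lambda \setminus c} = (n-1)!/\prod_{(i,j)\in\lambda\setminus c} h^{\lambda\setminus c}_{i,j}$, so it suffices to prove the purely combinatorial identity
\[
\sum_{c \text{ corner of } \lambda} \frac{\prod_{(i,j)\in\lambda} h^{\lambda}_{i,j}}{n \,\prod_{(i,j)\in\lambda\setminus c} h^{\lambda \setminus c}_{i,j}} \;=\; 1.
\]

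I would establish this identity by the probabilistic \emph{hook walk} of Greene, Nijenhuis and Wilf: pick a cell of $\lambda$ uniformly at random; then at each step move from the current cell to a uniformly chosen other cell in its hook, stopping when a corner is reached. A direct telescoping computation shows that the probability of terminating at a given corner $c$ is exactly the summand above, and since the walk terminates with probability $1$, the sum equals $1$. Alternative routes that would also work here include the original Frame--Robinson--Thrall manipulation of the Vandermonde-type expression for $f^{\lambda}$ via contents, or a bijective proof such as that of Novelli--Pak--Stoyanovskii matching pairs (SYT, hook sequence) with permutations of $\{1,\dots,n\}$.

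The main obstacle is verifying the hook-walk probability formula: one must show that the probability of exiting the walk at a corner $c=(a,b)$ factors as a product over the row and column of $c$ of terms of the form $1/(h_{i,b}-1)$ and $1/(h_{a,j}-1)$, and that this product matches the ratio of hook-length products appearing above. Once that combinatorial lemma is in hand, the rest of the argument is bookkeeping and induction; as the authors note, full details can be found in Sagan~\cite{sagan2001symmetric}.
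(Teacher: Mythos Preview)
Your outline is a correct sketch of a standard proof of the hook length formula via the Greene--Nijenhuis--Wilf hook walk, and the reduction to $\chi^\lambda(\id)=f^\lambda$ followed by induction on $n$ through the corner-removal recursion is sound. However, you should note that the paper does not actually prove this theorem at all: the authors state at the beginning of Section~\ref{representations} that they merely quote the representation-theoretic facts they need, citing Sagan~\cite{sagan2001symmetric} and the original source~\cite{frame1954hook}, and move on. So there is no ``paper's own proof'' to compare against; your proposal supplies a proof where the paper gives only a reference. If anything, your closing remark that ``full details can be found in Sagan'' is precisely the paper's entire treatment of this result.
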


We will need to consider the low-dimensional representations of
$\sym(n)$ seperately from the ones with higher dimension. We will not
give a proof to this since a similar result is proved in~\cite{godsil2016erdos}.

\begin{lem}
  Let $n\geq 13$. If $\chi^\lambda$ is an irreducible representation of
  $\sym(n)$ of dimension less than $2\binom{n+1}{2}$, then $\lambda$ is
  one of the following : $[n]$,$[1^n]$, $[n-1,1]$, $[2,1^{n-2}]$,
  $[n-2,2]$, $[2^2,1^{n-4}]$, $[n-2,1^2]$ or
  $[3,1^{n-3}]$.\label{low-dim}
\end{lem}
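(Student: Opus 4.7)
The plan is to prove the lemma by a direct computation using the Hook Length Formula, exploiting the fact that $\chi^\lambda(\id) = \chi^{\lambda'}(\id)$ where $\lambda'$ denotes the conjugate partition. Note that $2\binom{n+1}{2} = n(n+1)$, and the eight listed partitions split into four conjugate pairs: $\{[n],[1^n]\}$, $\{[n-1,1],[2,1^{n-2}]\}$, $\{[n-2,2],[2^2,1^{n-4}]\}$, and $\{[n-2,1^2],[3,1^{n-3}]\}$. A quick application of the Hook Length Formula shows these have dimensions $1$, $n-1$, $\binom{n}{2}-n = \frac{n(n-3)}{2}$, and $\binom{n-1}{2} = \frac{(n-1)(n-2)}{2}$ respectively, all of which are smaller than $n(n+1)$. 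So it suffices to show that every other partition of $n$ has dimension at least $n(n+1)$.

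By conjugation symmetry, I may restrict attention to partitions $\lambda$ with $\lambda_1 \geq \lceil n/2 \rceil$. I would then perform a case analysis on the first part $\lambda_1$:
\begin{itemize}
\item If $\lambda_1 = n$ or $n-1$, the partition is one of the listed ones.
\item If $\lambda_1 = n-2$, then $\lambda \in \{[n-2,2], [n-2,1,1]\}$, both listed.
\item If $\lambda_1 = n-3$, then $\lambda \in \{[n-3,3],[n-3,2,1],[n-3,1,1,1]\}$, and here the computation begins. Using the Hook Length Formula one obtains expressions of order $n^3/6$, $n^3/3$, $n^3/6$, each of which exceeds $n(n+1)$ once $n \geq 13$.
\item If $n/2 \leq \lambda_1 \leq n-4$, then $\lambda$ has at least four cells outside the first row arranged in a shape with at most four rows (after conjugating, if necessary, so as to keep $\lambda_1 \geq \lambda_1'$). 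Each such shape contributes a factor growing at least as fast as a cubic in $n$, again forcing $\chi^\lambda(\id) \geq n(n+1)$ for $n \geq 13$.
\end{itemize}

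The computations in the last two bullets are the routine but delicate part: one has to estimate the product of hook lengths in the denominator of the Hook Length Formula. A convenient bookkeeping device is to write
\[
\chi^\lambda(\id) = \frac{n!}{\prod_{(i,j)\in\lambda} h_{i,j}}
\]
and observe that once $\lambda_1 \leq n-3$ there is always a factor of $(n-c)$ in the numerator, for some bounded constant $c$, that survives cancellation with the hook lengths; grouping these surviving factors gives the desired polynomial lower bound in $n$.

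The main obstacle is verifying the numerical threshold $n \geq 13$ is tight for the strategy above: for small $n$ the cubic lower bounds from the Hook Length Formula are not yet dominant over the constant factors that appear, so one has to check the borderline partitions (for instance $[n-3,3]$ and $[n-4,4]$, together with their conjugates) by direct evaluation to confirm that no unlisted partition slips below $n(n+1)$. Once these finitely many small cases are handled, the asymptotic estimate takes over and the result follows. Since essentially the same case analysis is carried out in~\cite{godsil2016erdos}, I would only sketch the numerical verifications and cite that reference for the details.
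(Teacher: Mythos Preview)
Your approach matches the paper's: the paper offers no proof of this lemma and simply states that a similar result is proved in~\cite{godsil2016erdos}, exactly as you propose to do after sketching the underlying Hook Length case analysis. One small correction to your sketch: conjugation symmetry only lets you assume $\lambda_1 \geq \lambda_1'$, which yields $\lambda_1 \geq \sqrt{n}$ rather than $\lambda_1 \geq \lceil n/2\rceil$, so the ``square-ish'' partitions with $\lambda_1 < n/2$ still need a separate (easy) bound---but this is precisely the kind of detail the cited reference handles.
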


Next we state the well-known recursive formula for calculating the
value of a character of $\sym(n)$ on a conjugacy class. Let
$\chi_\rho^\lambda$ be the value of the character 
$\chi^\lambda$ on an element of the conjugacy class $C_\rho$.
A composition is an unordered partition. We will use square brackets
for the partitions corresponding to irreducible representations, and
round bracket for partitions corresponding to the conjugacy classes of $\sym(n)$.

\begin{lem}[Murnaghan-Nakayama Rule]
  If $\lambda \vdash n$ and $\rho$ is a composition of $n$, with $\rho
  = (\rho_1,\rho_2,\ldots,\rho_k)$, then
\begin{align*}
	\chi^\lambda_{\rho} &= \longsum_{\xi \in {\sf RH_{\rho_1}}(\lambda)} (-1)^{\ell\ell(\xi)} \chi_{\rho \backslash \rho_1}^{\lambda\setminus \xi},
\end{align*}
where ${\sf RH}_{\rho_1}(\lambda)$ is the set of all rim hooks with
$\rho_1$ cells of $\lambda$, and $\ell\ell(\xi)$ is the number of rows
the rim hook spans minus one.\label{MurnNakRule}
\end{lem}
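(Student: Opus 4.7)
The plan is to deduce the Murnaghan--Nakayama rule from the Frobenius character formula together with a ``rim-hook Pieri rule'' for multiplying a Schur function by a power sum. The starting point is the Frobenius identity
\[
p_\rho \;=\; \sum_{\lambda\vdash n} \chi^\lambda_\rho\, s_\lambda,
\]
which expresses a power-sum symmetric function in the Schur basis with coefficients equal to the irreducible character values of $\sym(n)$; equivalently $\chi^\lambda_\rho = \langle p_\rho, s_\lambda\rangle$ under the Hall inner product. One can take this as the definition of $\chi^\lambda$ coming from the characteristic map, or derive it from the construction of Specht modules. Either way, the character-theoretic content of the lemma is now translated into an identity about multiplying Schur functions.

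Next I would factor $p_\rho = p_{\rho_1}\cdot p_{\rho\setminus\rho_1}$ and, by induction on the length of $\rho$, expand $p_{\rho\setminus\rho_1} = \sum_\mu \chi^\mu_{\rho\setminus\rho_1}\, s_\mu$. Substituting gives
\[
p_\rho \;=\; \sum_\mu \chi^\mu_{\rho\setminus\rho_1}\,\bigl(p_{\rho_1}\, s_\mu\bigr),
\]
so the lemma reduces to the rim-hook Pieri rule
\[
p_k \cdot s_\mu \;=\; \sum_{\xi} (-1)^{\ell\ell(\xi)}\, s_{\mu + \xi},
\]
where $\xi$ ranges over rim hooks of size $k$ that can be appended to $\mu$ to yield a partition (and $\mu + \xi$ denotes the resulting shape). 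Reading off the coefficient of $s_\lambda$ on both sides and matching the appended rim hook with a removed rim hook $\xi \subseteq \lambda$ gives the stated recursion, since each shape $\mu$ contributing to the coefficient of $s_\lambda$ is exactly $\lambda \setminus \xi$ for some $\xi \in {\sf RH}_{\rho_1}(\lambda)$.

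The main work, and main obstacle, is establishing the rim-hook Pieri rule itself. I would prove it using the Jacobi--Trudi determinantal formula $s_\lambda = \det\bigl(h_{\lambda_i - i + j}\bigr)$ together with Newton's identity $p_k = \sum_{i=0}^{k-1} (-1)^i h_{k-i}\, e_i$ (equivalently, the generating-function relation between $\sum_k p_k\, t^{k-1}$ and $\tfrac{d}{dt}\log \prod_i (1 - x_i t)^{-1}$). Multiplying the Jacobi--Trudi determinant for $s_\mu$ by $p_k$ and distributing Newton's identity into successive rows of the determinant produces a telescoping combination: most of the terms cancel because the resulting matrices have two equal rows, and the surviving terms correspond to extending a single row of the Young diagram by shifting indices in a way that combinatorially realises the attachment of a rim hook. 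The delicate point is tracking signs: the $(-1)^i$ from Newton's identity combines with the sign of the row transposition needed to restore partition order, producing exactly $(-1)^{\ell\ell(\xi)}$ where $\ell\ell(\xi)$ is the number of rows the rim hook spans minus one.

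To anchor the induction, I would verify the base case where $\rho = (n)$ is a single part. Here the rule collapses to the classical statement that $\chi^\lambda_{(n)}$ vanishes unless $\lambda$ is itself a single rim hook, i.e.\ a hook $[n-r, 1^r]$, in which case $\chi^\lambda_{(n)} = (-1)^r$; this is immediate from the Pieri rule applied to $s_\emptyset = 1$. With the rim-hook Pieri rule and the base case in hand, the inductive step above delivers the Murnaghan--Nakayama recursion.
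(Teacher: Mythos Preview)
The paper does not prove this lemma at all: it is stated as a classical result, and the surrounding text explicitly says ``We do not prove these results, but we refer the reader to Sagan \cite{sagan2001symmetric} or another such book.'' So there is no paper proof to compare against; your proposal goes well beyond what the paper does.

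Your overall strategy---reduce to the rim-hook Pieri rule $p_k s_\mu = \sum_\xi (-1)^{\ell\ell(\xi)} s_{\mu+\xi}$ via the Frobenius expansion $p_\rho = \sum_\lambda \chi^\lambda_\rho s_\lambda$, then induct on the number of parts of $\rho$---is exactly the standard symmetric-function proof and is correct. One technical slip: the identity you call Newton's identity, $p_k = \sum_{i=0}^{k-1} (-1)^i h_{k-i} e_i$, is false (the right-hand side equals $(-1)^{k-1} e_k$, as one sees from $H(t)E(-t)=1$). The actual Newton relations are recursive, e.g.\ $k h_k = \sum_{i=1}^k p_i h_{k-i}$, and do not directly give $p_k$ as a bilinear expression in $h$'s and $e$'s. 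Pushing the argument through Jacobi--Trudi is possible but awkward; the clean route for the rim-hook Pieri rule is via the bialternant $s_\lambda = a_{\lambda+\delta}/a_\delta$, where one checks directly that $p_k\, a_\alpha = \sum_j a_{\alpha + k\epsilon_j}$, and then reordering the exponent sequence to a strict partition both identifies the rim hook and produces the sign $(-1)^{\ell\ell(\xi)}$. With that correction, your plan is sound.
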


We will build weighted adjacency matrices for the derangement graphs
for $\sym(n)$ with the setwise and the pointwise action.  We will then
calculate the eigenvalues of these matrices and prove that the ratio
bound holds with equality. To calculate the eigenvalues, we will need
to determine the value of the irreducible representations on specific
conjugacy classes. In our weighting, we weight many of the conjugacy
classes to be 0, so we only need to consider the values of the
irreducible representations that have a non-zero weight. The next
result gives the values of the irreducible representation on these
specific conjugacy classes.

\begin{lem}
  The irreducible characters of $\sym(n)$ that do not vanish on the
  conjugacy classes $C_{(n)}$, $C_{(n-1,1)}$, $C_{(n-3,3)}$,
  $C_{(n-4,3,1)}$, $C_{(n-2,2)}$ and
  $C_{(n-3,2,1)}$ are given in Table~\ref{char-val-pointwise}.
\end{lem}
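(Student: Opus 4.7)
The plan is to prove each entry in the table by iterated application of the Murnaghan--Nakayama rule (Lemma~\ref{MurnNakRule}). The key structural feature shared by all six listed conjugacy classes $C_\rho$ is that $\rho = (n-k, \rho')$ for some $k \le 4$ and some partition $\rho' \vdash k$. Since the first rim hook that must be stripped from $\lambda$ has size $n - k$, and removing it leaves a skew shape of only $k$ cells, the computation naturally splits into two stages: first, enumerate the ways to remove a rim hook of size $n-k$ from $\lambda$; second, compute a character value on a partition of size at most $4$.

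The first step is to enumerate which $\lambda \vdash n$ admit a rim hook of size $n - k$ at all. A rim hook of size $m$ in $\lambda$ is determined by its northeast and southwest endpoints on the rim of $\lambda$, and removing one of size $n - k$ leaves a partition $\mu \vdash k$. Running over all $\mu \vdash k$ with $k \in \{0,1,2,3,4\}$ and all legal ways to attach a rim hook of size $n - k$ (for $n$ large, as is assumed in Lemma~\ref{low-dim}), one finds that $\lambda$ must lie within a very small family of near-hook shapes, namely the partitions listed in that lemma. Any $\lambda$ outside this family contains no rim hook of size $n - k$ for at least one of the relevant $k$'s, and so vanishes on the corresponding conjugacy class.

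Next, for each surviving $\lambda$ from Lemma~\ref{low-dim} and each $\rho$ on the list, I would enumerate the actual rim hooks $\xi$ of size $n - k$ inside $\lambda$, record the sign $(-1)^{\ell\ell(\xi)}$ for each, and apply Murnaghan--Nakayama to the residual $\lambda \setminus \xi$ with the reduced cycle type $\rho \setminus \rho_1$. Because every residual has at most four cells, its character value can be read off from a small Young diagram or directly from the hook length formula. Conjugation symmetry gives $\chi^{\lambda'}_\rho = (-1)^{n-\ell(\rho)} \chi^\lambda_\rho$, which roughly halves the work by pairing each partition in Lemma~\ref{low-dim} with its transpose.

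The main obstacle is careful bookkeeping: signs can flip in non-obvious ways depending on how far down a rim hook extends, and two-row or two-column shapes often admit several competing rim hooks whose contributions partially cancel. As a sanity check, $\chi^\lambda_{(n)}$ equals $\pm 1$ when $\lambda$ is a hook and $0$ otherwise, a classical consequence of Murnaghan--Nakayama that calibrates the sign conventions throughout. With these organizational tools in place, the lemma reduces to a finite case analysis whose output is exactly Table~\ref{char-val-pointwise}.
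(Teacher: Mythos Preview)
The paper does not actually prove this lemma; it is stated without proof as a background result (the section preamble says ``We do not prove these results''), with the table relegated to the appendix. Your overall strategy---apply Murnaghan--Nakayama by first stripping a rim hook of size $n-k$ with $k\le 4$ and then evaluating a character on a partition of size at most $4$---is the standard and correct way to generate and verify such a table.

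However, there is a genuine gap in your second paragraph. You assert that the partitions $\lambda$ admitting the required rim hooks are precisely the eight shapes from Lemma~\ref{low-dim}, and in the next paragraph you proceed to work only with ``each surviving $\lambda$ from Lemma~\ref{low-dim}.'' But Lemma~\ref{low-dim} classifies representations of small \emph{dimension} (below $2\binom{n+1}{2}$), which is an entirely separate condition. A glance at Table~\ref{char-val-pointwise} shows the list of non-vanishing characters is far longer than eight: it includes $\chi^{[n-3,3]}$, $\chi^{[n-3,2,1]}$, $\chi^{[n-4,2^2]}$, and entire parametrized families such as $\chi^{[n-k-4,4,1^k]}$ for $0\le k< n-8$ and $\chi^{[n-k-5,3,2,1^k]}$ for $0\le k\le n-8$. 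These representations have dimension of order $n^3$ or more, so none of them appears in Lemma~\ref{low-dim}, yet each contains a rim hook of size $n-3$ (or $n-4$) and therefore does not vanish on $C_{(n-3,3)}$ (or $C_{(n-4,3,1)}$). Your enumeration---run over all $\mu\vdash k$ for $k\le 4$ and all legal placements of an $(n-k)$-rim-hook on $\mu$---is the right procedure, but carried out correctly it yields the roughly thirty shapes and shape families in the appendix table, not the eight low-dimensional ones.
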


We end this section with considering two representations of the
symmetric group.  The Young's subgroup $\sym(n-2) \times \sym(2)
=\sym([n-2,2]) $ is the setwise stabilizer of a $2$-set. Thus the
setwise action of $\sym(n)$ sets of size $2$ has representation
\[
\Ind^{\sym(n)}(1_{\sym([n-2,2])}) =\chi^{[n]} + \chi^{[n-1,1]} + \chi^{[n-2,2]}.
\]
Similarly, the Young's subgroup $\sym(n-2) \times \sym(1) \times \sym(1) =\sym([n-2,1,1]) $ is the
pointwise stabilizer of a $2$-set. Thus the pointwise action of $\sym(n)$ on sets of size $2$ has
representation  
\[
\Ind^{\sym(n)}(1_{\sym([n-2,1,1])}) =\chi^{[n]} + 2\chi^{[n-1,1]} + \chi^{[n-2,2]}+ \chi^{[n-2,1,1]}.
\]
Both of these decompositions follow from calculating the Kostka numbers.

We will construct a weighted adjacency matrix for the derangement
graph for each action and the ratio bound will hold with equality for
this matrix.  In the weighted adjacency matrix that we construct, the
non-trivial representations in the decomposition of each of the
representation of the permutation action will be exactly the
representations that achieve the minimal eigenvalue. This fact will
show that the characteristic vectors of any maximum coclique will lie
in a specific $\sym(n)$-module.

\section{Proof of Theorem~\ref{main-thm}}\label{proof-thm-setwise}

In this section we will give the weighted adjacency
matrix for the derangement graph of $\sym(n)$ with the 2-setwise
action; this matrix has the form
\begin{align}
A = \omega_1A_{(n)} + \omega_2A_{(n-1,1)} + \omega_3A_{(n-3,3)} + \omega_4 A_{(n-4,3,1)}\label{w-mat}
\end{align}
for some positive numbers $\omega_1$ $\omega_2$, $\omega_3$ and
$\omega_4$ which are to be determined. We are choosing only four of
the derangement classes in the association scheme (namely the
conjugacy classes with cycle types $(n)$, $(n-1,1)$, $(n-3,3)$ and
$(n-4,3,1)$) to have a non-zero weighting. The sizes of the conjugacy
classes with cycle types $(n),\ (n-1,1),\ (n-3,3)$ and
$(n-4,3,1)$ are, respectively,
\[
\alpha =(n-1)!, \quad \beta = n (n-2)!, 
\quad  \gamma = 2\binom{n}{3}(n-4)!, \quad \delta = 8\binom{n}{4}(n-5)!
\]

We choose our weighting so that the following three conditions hold:
\begin{enumerate} 
\item the trivial representation gives the
eigenvalue $\binom{n}{2}-1$;
\item the nontrivial irreducible characters
that are in the decomposition of the permutation action (namely $\chi^{[n-1,1]}$
and $\chi^{[n-2,2]}$)  have
eigenvalue $-1$; and 
\item all other representations give eigenvalues strictly between
  $\binom{n}{2}-1$ and $-1$.
\end{enumerate}

It is straight-forward to calculate the eigenvalues of the adjacency matrices for
the four conjugacy classes we have chosen corresponding to the irreducible
representations in the decomposition of the permutation action. This
values are in the following table.

\begin{table}[h]
	\begin{tabular}{|c|c|c|c|c|} \hline
		& $A_{(n)}$  & $A_{(n-1,1)}$ & $A_{(n-3,3)}$ & $A_{(n-4,3,1)}$ \\
		Representation  & & & &\\ \hline
		$\chi^{[n]}$      &  $\alpha $ & $\beta$ & $\gamma$  & $\delta$   \\ \hline
		$\chi^{[n-1,1]}$ & $-\frac{\alpha}{n-1}$ & $0$  & $-\frac{\gamma}{n-1}$ & $0$ \\ \hline
		$\chi^{[n-2,2]}$ &$0$ &  $-\frac{2\beta}{n(n-3)}$ & $0$ & $-\frac{2\delta}{n(n-3)}$  \\ \hline
	\end{tabular}
	\caption{Eigenvalues of $A_{(n)}$, $A_{(n-1,1)},\ A_{(n-3,3)}$ and $A_{(n-4,3,1)}$ afforded by $\chi^{[n]}$, $\chi^{[n-1,1]}$ and $\chi^{[n-2,2]}$.}
\end{table}

By Lemma~\ref{lem:weightedEvalues}, to find weightings that satisfy
conditions (1) and (2) above we
need to solve the following three linear equations.

\begin{align}
\begin{aligned}
\omega_1 \alpha + \omega_2\beta + \omega_3 \gamma + \omega_4 \delta &= \binom{n}{2}-1\\
 -\omega_1 \alpha - \omega_3 \gamma &= -(n-1) \\
- \omega_2\beta -\omega_4 \delta &= -\frac{n(n-3)}{2}.%\label{main-eq}
\end{aligned}\label{weights-eq-setwise}
\end{align}

This linear system has infinitely many solutions with two free
variables. The general solution to \eqref{weights-eq-setwise} has the form
\begin{align*}
	\omega_1(s,t) &=  \frac{1}{\alpha} \left( -\gamma s + (n-1) \right)\\
	\omega_2(s,t) &= \frac{1}{\beta} \left ( -\delta t +\frac{n(n-3)}{2} \right)\\
	\omega_3(s,t) &= s\\
	\omega_4(s,t) &= t.
\end{align*}

Thus, expressed in these two free variables, any eigenvalue of $A$ is of the form
\begin{align}
	\xi_{\chi} &= \frac{1}{\chi(\id)} \left( \left((n-1) -\gamma s\right) \chi_{(n)} + \left(\frac{n(n-3)}{2} - \delta t\right)\chi_{(n-1,1)} + \gamma s \chi_{(n-3,3)} + \delta t \chi_{(n-4,3,1)} \right)\label{eig-set}
\end{align}
for $\chi \in \operatorname{Irr}(\sym(n))$. In other words,
eigenvalues of $A$ are functions of the parameters $s$ and $t$.  For
the remaining part of the proof, we shall write the eigenvalues for an
irreducible character $\chi$ in function of the parameters $t$ and $s$
(that is, $\xi_{\chi}(s, t)$). We will choose values of $s$ and
$t$ so that all the eigenvalues of $A$ satisfy all three conditions
listed above.

To do this, we will define a polytope $\mathcal{P}$ and show for any
values of $(s,t)$ in $\mathcal{P}$, the weightings $\omega_i(s,t)$
give a matrix $A$ that satisfies the three conditions above.
We then apply Theorem~\ref{thm:wtRatio} to this $A$, which shows that
Theorem~\ref{main-thm} holds.

Define $(\mathcal{P})$ to be the polytope that is the intersection of the
following halfspaces of $\mathbb{R}^2$
\begin{align*}
	(\mathcal{P}) \quad \left\{ \qquad
	\begin{aligned}
		&2\gamma x - 2\delta y + \binom{n-1}{2} - (n-1) +2 <0\\
		&2\gamma x - 2\delta y + \binom{n-1}{2} - (n-1) >0\\
		&0< \gamma x < n-1,\\ 
               &0< \delta y < \frac{n(n-3)}{2}.
	\end{aligned}
	\right. 
\end{align*}
The polytope ($\mathcal{P}$) is non-empty since the first two
equations are those of parallel lines and they intersect the rectangle
formed by the last two equations.  Note that the final two equations
imply the following result.

\begin{lem}
  For any $(s,t) \in \mathcal{P}$, the weightings $\omega_1(s,t)$,
  $\omega_2(s,t)$, $\omega_3(s,t)$ and $\omega_4(s,t)$ are positive.
\end{lem}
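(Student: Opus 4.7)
The plan is to verify each of the four weightings directly against the two ``rectangle'' inequalities
\[
0<\gamma x<n-1,\qquad 0<\delta y<\tfrac{n(n-3)}{2}
\]
that appear in the definition of $\mathcal{P}$; the first two halfspace conditions play no role here and will only be needed later for the spectral inequalities. Since $\alpha=(n-1)!$, $\beta=n(n-2)!$, $\gamma=2\binom{n}{3}(n-4)!$ and $\delta=8\binom{n}{4}(n-5)!$ are sizes of conjugacy classes, each of $\alpha,\beta,\gamma,\delta$ is strictly positive for every $n$ in the range we consider, so divisions by them preserve sign.

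Fix $(s,t)\in\mathcal{P}$. First, the inequality $0<\gamma s$ together with $\gamma>0$ immediately gives $\omega_3(s,t)=s>0$, and similarly $0<\delta t$ gives $\omega_4(s,t)=t>0$. Next, $\omega_1(s,t)=\frac{1}{\alpha}\bigl((n-1)-\gamma s\bigr)$; the upper bound $\gamma s<n-1$ in the definition of $\mathcal{P}$ makes the numerator positive, and dividing by $\alpha>0$ yields $\omega_1(s,t)>0$. The argument for $\omega_2(s,t)=\frac{1}{\beta}\bigl(\tfrac{n(n-3)}{2}-\delta t\bigr)$ is identical: the bound $\delta t<\tfrac{n(n-3)}{2}$ gives a positive numerator, and $\beta>0$ finishes the claim.

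There is no genuine obstacle in this lemma: it is simply a sign-bookkeeping step that records the role of the last two halfspaces in the definition of $\mathcal{P}$. The real work is deferred to the next statements, where the first two (parallel-line) halfspaces will be used to force the non-principal eigenvalues of $A$ to lie strictly between $-1$ and $\binom{n}{2}-1$.
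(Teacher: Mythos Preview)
Your proof is correct and follows exactly the approach the paper indicates: the paper simply remarks that ``the final two equations imply the following result'' and states the lemma without further proof, and you have written out precisely those sign checks using the rectangle inequalities $0<\gamma s<n-1$ and $0<\delta t<\tfrac{n(n-3)}{2}$.
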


Next we will determine the eigenvalues of $A$ so that
Theorem~\ref{thm:wtRatio} can be applied.

\begin{lem}\label{lem:poscoeffient1}
Let $n\geq 11$. For any $(s,t) \in \mathcal{P}$, the eigenvalues of the matrix 
\[
A = \omega_1(s,t)A_{(n)} + \omega_2(s,t)A_{(n-1,1)} +
        \omega_3(s,t)A_{(n-3,3)} + \omega_4(s,t) A_{(n-4,3,1)}
\]
are in $[-1,\binom{n}{2}-1]$. Moreover, the eigenvalues for
$\chi^{[n-1,1]}$ and $\chi^{[n-2,2]}$ are the only ones equal to $-1$.
\end{lem}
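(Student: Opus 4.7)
The plan is to split the irreducibles of $\sym(n)$ into three groups and handle each separately. By the very design of the linear system~\eqref{weights-eq-setwise}, the three characters in the decomposition of the setwise permutation action are built in: $\xi_{\chi^{[n]}}(s,t) = \binom{n}{2}-1$ and $\xi_{\chi^{[n-1,1]}}(s,t) = \xi_{\chi^{[n-2,2]}}(s,t) = -1$ hold identically on $\mathbb{R}^2$. Since every weight $\omega_i(s,t)$ is positive on $\mathcal{P}$, the matrix $A$ has non-negative entries with constant row sum $\binom{n}{2}-1$, so by Perron--Frobenius this is automatically its largest eigenvalue. It therefore suffices to prove the strict inequality $\xi_\chi(s,t) > -1$ for every irreducible $\chi \notin \{\chi^{[n]}, \chi^{[n-1,1]}, \chi^{[n-2,2]}\}$ and every $(s,t)\in\mathcal{P}$.

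Next I would dispose of the remaining low-dimensional characters. By Lemma~\ref{low-dim}, these form the finite list $\chi^{[1^n]}, \chi^{[2,1^{n-2}]}, \chi^{[n-2,1^2]}, \chi^{[2^2,1^{n-4}]}, \chi^{[3,1^{n-3}]}$. Using the Murnaghan--Nakayama rule (Lemma~\ref{MurnNakRule}), I would compute $\chi^\lambda_\rho$ for each such $\lambda$ and each $\rho \in \{(n),(n-1,1),(n-3,3),(n-4,3,1)\}$ (these values are exactly those promised by the table following Lemma~\ref{MurnNakRule}), and substitute into formula~\eqref{eig-set} to express each eigenvalue as an explicit affine function of $(s,t)$. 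The defining inequalities of the polytope $\mathcal{P}$ — in particular the parallel pair $0 < 2\gamma s - 2\delta t + \binom{n-1}{2} - (n-1) + 2$ and $2\gamma s - 2\delta t + \binom{n-1}{2} - (n-1) < 0$, together with the box conditions — are engineered precisely so that each of these five affine functions is strictly greater than $-1$ on $\mathcal{P}$. This reduces to a finite arithmetic check.

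For the high-dimensional family — those $\chi^\lambda$ with $\chi^\lambda(\id) \geq 2\binom{n+1}{2}$ — the bound follows by size considerations. The Murnaghan--Nakayama rule sharply restricts the shapes $\lambda$ for which $\chi^\lambda_\rho$ does not vanish on our four chosen classes: one must be able to strip a rim hook of size $n$, $n-1$, $n-3$ or $n-4$ from $\lambda$. Where nonzero, each $|\chi^\lambda_\rho|$ is bounded by the dimension of some character of $\sym(k)$ with $k \leq 4$, hence by an absolute constant. The $\mathcal{P}$-bounds keep each of the four coefficients in~\eqref{eig-set} of order at most $O(n^2)$. Outside the exceptions in Lemma~\ref{low-dim} the minimum dimension grows at least like $n(n-1)(n-5)/6 = \Theta(n^3)$ (the dimension of $\chi^{[n-3,3]}$), so $|\xi_\chi(s,t)| = O(1/n)$, strictly less than $1$ once $n \geq 11$.

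The main obstacle is the character bound in the high-dimensional case: the naive estimate $|\chi^\lambda_\rho| \leq \chi^\lambda(\id)$ is far too loose to close the argument. What rescues the proof is that each of the four chosen conjugacy classes has a cycle of length at least $n-4$; this severely constrains via Murnaghan--Nakayama the $\lambda$ on which $\chi^\lambda_\rho$ can be nonzero and forces $|\chi^\lambda_\rho|$ to be bounded by a quantity of much smaller order than the dimension. Combined with the gap supplied by Lemma~\ref{low-dim} between low- and high-dimensional representations, this yields the strict inequality needed to apply Theorem~\ref{thm:wtRatio} and complete the proof.
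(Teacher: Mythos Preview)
Your three-part split --- the permutation-module constituents handled by construction, the remaining five low-dimensional characters from Lemma~\ref{low-dim} by explicit evaluation against the polytope inequalities, and everything else by a size argument --- is exactly the paper's structure, and the low-dimensional analysis you describe is precisely what the paper carries out (your Perron--Frobenius remark for the upper bound is a pleasant shortcut the paper does not make explicit).

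The high-dimensional step, however, is handled more sharply in the paper than in your outline, and your version does not quite close at $n\ge 11$. The paper uses the fact (recorded in its appendix table) that on each of the four classes $(n),(n-1,1),(n-3,3),(n-4,3,1)$ every irreducible character value lies in $\{-1,0,1\}$. Since the four positive coefficients in~\eqref{eig-set} sum identically to $\binom{n}{2}-1$, the triangle inequality then gives
\[
|\xi_\chi(s,t)| \;\le\; \frac{\binom{n}{2}-1}{\chi(\id)} \;<\; 1
\]
as soon as $\chi(\id)>\binom{n}{2}$, with no asymptotics needed. Your bound ``$|\chi^\lambda_\rho|$ is at most the dimension of some $\sym(k)$-character with $k\le 4$'' overlooks the sum over several rim hooks of the long part in the Murnaghan--Nakayama recursion, so the honest constant obtained this way exceeds $1$; combined with the $\Theta(n^3)$ dimension gap you invoke (which is true but stronger than what Lemma~\ref{low-dim} actually states), the resulting estimate has the shape $C/n$ with $C$ too large to force $|\xi_\chi|<1$ already at $n=11$. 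The fix is to sharpen the character bound to $|\chi^\lambda_\rho|\le 1$ on these four classes, as the paper does, after which the telescoping makes the cubic dimension bound unnecessary.
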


\begin{proof}
	Let $\chi\in \operatorname{Irr}(\sym(n))$ be such that
        $\chi(\id) >\binom{n}{2}$. Using the triangle inequality and Lemma~\ref{lem:poscoeffient1} on
        \eqref{eig-set}
	\begin{align*}
		\left|\xi_{\chi}\right| 
                   &\leq \frac{1}{\binom{n}{2}}\left(  \left((n-1) -\gamma s\right) |\chi_{(n)}| 
                + \left(\frac{n(n-3)}{2} - \delta
                  t\right)|\chi_{(n-1,1)}| + \gamma s |\chi_{(n-3,3)}| 
                + \delta t |\chi_{(n-4,3,1)}| \right)\\
		&\leq \frac{1}{\binom{n}{2}} \left(  \left((n-1)
                    -\gamma s\right)  + \left(\frac{n(n-3)}{2} -
                    \delta t \right) + \gamma s  + \delta t \right)\\
		&=  \frac{1}{\binom{n}{2}} \left(  (n-1)  +\frac{n(n-3)}{2} \right)\\
		&= \frac{1}{\binom{n}{2}} \left(    \binom{n}{2}-1 \right)\\
             &<1.
	\end{align*}
	
	Using Lemma~\ref{low-dim}, the eigenvalues for irreducible characters of degree less than $\binom{n}{2}$ are
	\begin{align*}
		\xi_{\chi^{[n]}} &= \binom{n}{2}-1,\\
		\xi_{\chi^{[n-1,1]}} &= \xi_{\chi^{[n-2,2]}} = -1\\
		\xi_{\chi^{[n-2,1^2]}} &= \frac{2}{n-2}\\
		\xi_{\chi^{[3,1^{n-3}]}} &= \frac{2(-1)^{n-1}}{n-2} +
                (-1)^{n}\frac{2\gamma s }{\binom{n-1}{2}} \\
		\xi_{\chi^{[2^2,1^{n-4}]}} &= (-1)^{n-1} + (-1)^n\frac{4 \delta t}{n(n-3)}  \\
		\xi_{\chi^{[2,1^{n-2}]}} &= (-1)^{n} + (-1)^{n-1}\frac{2\gamma s}{n-1}\\
		\xi_{\chi^{[1^n]}} &= (-1)^n \left( \binom{n}{2} - 2(n-1) -1 + 2\gamma s - 2 \delta t \right)\\
	\end{align*}

        One can immediately see that, with the exception of
        $\xi_{\chi^{[n]}}$, these eigenvalues are all strictly less
        than $\binom{n}{2}-1$.

        So we need to show that the eigenvalues
        $\xi_{\chi^{[3,1^{n-3}]}}$, $\xi_{\chi^{[2^2,1^{n-4}]}}$,
        $\xi_{\chi^{[2,1^{n-2}]}}$ and $\xi_{\chi^{[1^n]}}$ are
        strictly greater than $-1$ whenever $(s,t) \in
        \mathcal{P}$.

When $n$ is even, the eigenvalues are larger than $-1$ if
\begin{enumerate}
\item $2\gamma s - 2 \delta t + \binom{n-1}{2} - (n-1) >0$,
\item $\gamma s <n-1$,
\item $0 < t$
\end{enumerate}

When $n$ is odd, the eigenvalues are larger than $-1$ if 
\begin{enumerate}
\item $2\gamma s - 2\delta t + \binom{n-1}{2} - (n-1) +2 <0$, 
\item $0 < s$,
\item $\delta t<\frac{n(n-3)}{2}$.
\end{enumerate}
	
Combining the cases when $n$ is even and odd, we obtain exactly the
equations of $(\mathcal{P})$. Since $(\mathcal{P})$ is not empty, all
the eigenvalues are greater than $-1$ for any $(s,t) \in
\mathcal{P}$. This completes the proof.
\end{proof}

\begin{proof}[Proof of Theorem~\ref{main-thm} ]
	For $2\leq n\leq 10$, we use Sagemath~\cite{sagemath} to prove the result. For $n\geq 11$, we use Theorem~\ref{thm:wtRatio}, we have
	\begin{align*}
	\alpha(\Gamma_{\sym(n)})\leq \alpha(X)\leq \frac{n!}{1 - \frac{\binom{n}{2}-1}{-1}} = 2(n-2)!.
	\end{align*}
\end{proof}

This result shows that equality holds in Theorem~\ref{thm:wtRatio};
thus, if $S$ is a maximum 2-setwise intersecting set, then
$v_S - \left( \binom{n}{2} \right)^{-1} \one$ is a $-1$-eigenvector for $A$. Since the only irreducible
representations that give the least eigenvalue $-1$ are $\chi^{[n-1,1]}$
and $\chi^{[n-2,2]}$, we have the following corollary.

\begin{cor}
	For $n\geq 4$, any characteristic vector of a maximum 2-setwise
	intersecting set of permutations in $\sym(n)$ is in
	the module $V_{[n]}\oplus V_{[n-1,1]} \oplus V_{[n-2,2]}$.
\end{cor}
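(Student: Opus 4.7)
The plan is to read off the corollary directly from the equality clause of the weighted ratio bound (Theorem~\ref{thm:wtRatio}) applied to the matrix $A$ constructed in Section~\ref{proof-thm-setwise}, combined with the eigenvalue identification already carried out in Lemma~\ref{lem:poscoeffient1}. I split into the range $n \geq 11$, where the ratio bound argument applies, and the small cases $4 \leq n \leq 10$, which I would handle with a direct computer calculation as was done for Theorem~\ref{main-thm}.

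For $n\geq 11$, fix $(s,t)\in\mathcal{P}$ and let $A$ be the corresponding weighted adjacency matrix. The proof of Theorem~\ref{main-thm} shows that equality holds in Theorem~\ref{thm:wtRatio}: the graph has $n!$ vertices, constant row sum $d=\binom{n}{2}-1$, least eigenvalue $\tau=-1$, and a $2$-setwise intersecting family $S$ with $|S|=2(n-2)!=\frac{n!}{1-d/\tau}$ is a coclique of maximum size. The equality clause of Theorem~\ref{thm:wtRatio} therefore guarantees that
\[
v_S-\frac{|S|}{n!}\one
\]
is a $(-1)$-eigenvector of $A$.

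Next I identify the $(-1)$-eigenspace as a $\sym(n)$-submodule of $\C[\sym(n)]$. Since $A$ is a linear combination of matrices in the conjugacy class association scheme, each module $V_\lambda$ is contained in a single eigenspace of $A$, with eigenvalue $\xi_{\chi^\lambda}$ given by \eqref{eig-set}. Lemma~\ref{lem:poscoeffient1} states that among all irreducible characters the value $\xi_{\chi^\lambda}=-1$ is attained precisely for $\lambda\in\{[n-1,1],[n-2,2]\}$. Hence the $(-1)$-eigenspace of $A$ equals $V_{[n-1,1]}\oplus V_{[n-2,2]}$, and consequently
\[
v_S-\frac{|S|}{n!}\one \;\in\; V_{[n-1,1]}\oplus V_{[n-2,2]}.
\]
Because $\one\in V_{[n]}$, adding back the correction term gives $v_S\in V_{[n]}\oplus V_{[n-1,1]}\oplus V_{[n-2,2]}$, which is the stated conclusion.

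For the remaining range $4\leq n\leq 10$, the weighting argument does not directly apply (the triangle inequality step in Lemma~\ref{lem:poscoeffient1} required $n\geq 11$), so I would verify the module containment case by case using Sagemath, exactly in the spirit of the small-case verification in the proof of Theorem~\ref{main-thm}: enumerate the maximum $2$-setwise intersecting families (up to the coset action), compute their characteristic vectors, and project onto the orthogonal complement of $V_{[n]}\oplus V_{[n-1,1]}\oplus V_{[n-2,2]}$ to confirm it is zero. The only mildly delicate point is to make sure the $(-1)$-eigenspace of $A$ contains no extraneous irreducible; this is exactly what Lemma~\ref{lem:poscoeffient1} has already ruled out, so the corollary follows with no further work.
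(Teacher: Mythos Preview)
Your argument is correct and follows the same route as the paper: for $n\geq 11$ you invoke the equality case of the weighted ratio bound together with Lemma~\ref{lem:poscoeffient1} to identify the $(-1)$-eigenspace of $A$ as $V_{[n-1,1]}\oplus V_{[n-2,2]}$, and for $4\leq n\leq 10$ you defer to a Sagemath check, exactly as the paper does in the proof of Theorem~\ref{main-thm}. The only blemish is your final sentence, which appeals to Lemma~\ref{lem:poscoeffient1} for the small cases after having (correctly) noted that it does not apply there; just delete that sentence and rely on the direct computation you already proposed.
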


\section{Proof of Theorem~\ref{main-pointwise}}\label{point-proof}

In this section we prove Theorem~\ref{main-pointwise} by constructing
a weighted adjacency matrix for the derangement graph of $\sym(n)$
with the $2$-pointwise action. This weighted adjacency matrix will be
a linear combination of the adjacency matrices in the conjugacy
classes scheme for $\sym(n)$ corresponding to the conjugacy classes
with cycle types $(n)$, $(n-1,1)$, $(n-2,2)$, $(n-3,3)$,$(n-3,2,1)$
and $(n-4,3,1)$.  In particular, we set
\begin{align}
A &= \omega_1 A_{(n)} + \omega_2 A_{(n-1,1)}+\omega_3 A_{(n-2,2)} + \omega_4 A_{(n-3,3)} + \omega_5 A_{(n-3,2,1)} + \omega_6 A_{(n-4,3,1)}.\label{w-mat-pointwise}
\end{align}

Similar to the proof of Theorem~\ref{main-thm}, we will find
$(\omega_i)$ for $i=1,\ldots,6 $ so that the following three conditions hold:
\begin{enumerate} 
\item the trivial representation gives the
eigenvalue $2\binom{n}{2}-1$;
\item the nontrivial irreducible characters
that are in the decomposition of the 2-pointwise permutation action (namely
$\chi^{[n-1,1]}$, $\chi^{[n-2,2]}$ and $\chi^{[n-2,1,1]}$)  have
eigenvalue $-1$; and 
\item all other representations give eigenvalues strictly between
  $2\binom{n}{2}-1$ and $-1$.
\end{enumerate}

Define
\begin{align*}
\alpha = (n-1)!, &\quad &
\beta = n(n-2)!, &\quad& 
\gamma = \binom{n}{2}(n-3)!,  \\
\delta = 2\binom{n}{3}(n-4)!, &\quad &
\mu = 3\binom{n}{3}(n-4)!, &\quad & 
\nu = 8 \binom{n}{4}(n-5)!. 
\end{align*}

These numbers are respectively the sizes of the conjugacy classes with
cycle type $(n)$, $(n-1,1)$, $(n-2,2)$, $(n-3,3)$, $(n-3,2,1)$ and $(n-4,3,1)$. 
The following table gives the eigenvalues values of the matrices
$A_{(n)}$, $A_{(n-1,1)}$, $A_{(n-2,2)}$, $A_{(n-3,3)}$,
$A_{(n-3,2,1)}$ and $A_{(n-4,3,1)}$ in the conjugacy class association
scheme corresponding to these irreducible characters.

\renewcommand\arraystretch{1.5}

\begin{table}[H]
	\centering
	\begin{tabular}{ |p{3.5cm}|c|c|c|c|c|c|}
		\hline
		 \centering &\centering $A_{(n)}$ & $A_{(n-1,1)}$ & \centering $A_{(n-2,2)}$ & $A_{(n-3,3)}$ & $\ \  A_{(n-3,2,1)}$ & $A_{(n-4,3,1)}$ \\
		\centering Representation & & & &  & &\\
		\hline 
		\centering $\chi^{[n]}$ &  \centering $\alpha$ &\centering $\beta$ &\centering $\gamma$ & $\delta$ & $ \mu$   & $\nu$\\
		\hline
		\centering $\chi^{[n-1,1]}$&  \centering $-\frac{\alpha}{n-1}$ &\centering $0$&\centering $-\frac{\gamma}{n-1}$ & $\frac{-\delta}{n-1}$  & $ 0$ & $0$\\
		\hline
		\centering $\chi^{[n-2,2]}$&  \centering $0$ &\centering $-\frac{2\beta}{n(n-3)}$&\centering $\frac{2\gamma}{n(n-3)}$ & $0$ &  $ 0$ & $-\frac{2\nu}{n(n-3)}$\\
		\hline
		\centering $\chi^{[n-2,1,1]}$&  \centering $\frac{\alpha}{\binom{n-1}{2}}$ &\centering $0$&\centering $0$ & $\frac{\delta}{\binom{n-1}{2}}$  &  $  -\frac{\mu}{\binom{n-1}{2}}$ & $0$\\
		\hline 
	\end{tabular}
	\caption{Eigenvalues afforded by $\chi^{[n]}$, $\chi^{[n-1,1]}$, $\chi^{[n-2,2]}$ and $\chi^{[n-2,1,1]}$ on $A_{(n)}$, $A_{(n-1,1)}$, $A_{(n-2,2)},$ $A_{(n-3,3)}$, $A_{(n-3,2,1)}$ and $A_{(n-4,3,1)}$.}
\end{table}
\renewcommand\arraystretch{1}

Using Lemma~\ref{lem:weightedEvalues}, it is straight-forward to
calculate the eigenvalues of $A$ afforded by $\chi^{[n]}$,
$\chi^{[n-1,1]}$, $\chi^{[n-2,2]}$ and $\chi^{[n-2,1,1]}$ as functions
of $\omega_i$ where $i =1,\dots, 6$.  Thus, in order to satisfy
conditions (1) and (2) above, the $\omega_i$ must satisfy the
following system of linear equations.
\begin{align}
\begin{aligned}
\alpha \omega_1 + \beta \omega_2 + \gamma \omega_3 + \delta \omega_4 + \mu \omega_5 + \nu \omega_6 &= n(n-1)-1\\
-\alpha \omega_1 - \gamma \omega_3 - \delta \omega_4 &= -(n-1)\\
-\beta \omega_2 +\gamma \omega_3 - \nu \omega_6 &= - \frac{n(n-3)}{2}\\
\alpha \omega_1 + \delta \omega_4  - \mu \omega_5 &= - \binom{n-1}{2}.
\end{aligned}
\label{eq_conj_pointw}
\end{align}

The system~\eqref{eq_conj_pointw} has infinitely many solutions with
three free variables. The following is a general solution to~\eqref{eq_conj_pointw}.

%what is with the (t,s,m)??
\begin{align}
		\begin{aligned}
                  \omega_1(r,s,t) &= \frac{1}{\alpha} \left( \binom{n}{2}-1 -\beta r -\nu t - \delta s \right),\\
                  \omega_2(r,s,t) &= r,\\
                  \omega_3(r,s,t) &= \frac{1}{\gamma} \left( 1- \binom{n-1}{2} + \beta r + \nu t \right),\\
                  \omega_4(r,s,t) &= s,\\
                  \omega_5(r,s,t) &= \frac{1}{\mu} \left( \binom{n}{2} + \binom{n-1}{2} - 1 -\beta r  - \nu t \right)\\
                  \omega_6(r,s,t) &= t
		\end{aligned}
	\label{point-weights}
\end{align}
for $r,s,t\in \mathbb{R}$.

The eigenvalue of $A$ corresponding to $\chi$, denoted by
$\xi_{\chi}(r,s,t)$, as a function of $r$, $s$, and $t$ is the following.
\begin{align} 	\label{point-eig}
	\xi_{\chi} (r,s,t) &= \frac{1}{\chi(\id)}  \left( 
                       \left( \binom{n}{2} -1 -\beta r - \delta s - \nu t \right)\chi_{(n)} 
                  + \beta r \chi_{(n-1,1)}  \right. \\ \nonumber
              & \quad + \left( 1 - \binom{n-1}{2} + \beta r + \nu t  \right)\chi_{(n-2,2)} 
                 +\delta s \chi_{(n-3,3)}\\ \nonumber
              & \quad \left. + \left( \binom{n}{2} + \binom{n-1}{2} -1 - \beta r - \nu t \right)\chi_{(n-3,2,1)}
        + \nu t \chi_{(n-4,3,1)}    \right). \nonumber
\end{align}

In particular, we have
\begin{align*}
	\xi_{\chi^{[1^n]}} &=  (-1)^{n} \left( 4\beta r +2\nu t + 2 \delta s + 3 - 2 \binom{n}{2} - 2\binom{n-1}{2} \right) \\
	\xi_{\chi^{[2,1^{n-2}]}} &= \frac{(-1)^{n}}{n-1} \left( \binom{n}{2} + \binom{n-1}{2} -2 - 2\beta r -2\delta s -2\nu t \right)\\
	\xi_{\chi^{[2^2,1^{n-4}]}} &= (-1)^{n-1} + (-1)^{n}\frac{4\nu  t}{n(n-3)} \\
	\xi_{\chi^{[3,1^{n-3}]}} &= (-1)^{n} + (-1)^{n}\frac{2\delta s}{\binom{n-1}{2}}.
\end{align*}

We will distinguish the cases $n$ even and $n$ odd, for each case we
will pick values for $r$, $s$ and $t$ so that the matix $A$ satisfies
the conditions in Theorem~\ref{thm:wtRatio}.

%%%%%%%%%
%%  From here; check the evalues
%%%%%%%%%%
\subsection{Subcase 1: $n$ even}

%%%%%%
%%%%%%
For this case we set $s=0$; this removes the conjugacy class
with cycle type $(n-3,3)$ from the weighted adjacency matrix. 
With $s=0$ and $n$ is even, we can calculate the eigenvalues for the
eight irreducible characters of degree less than $2\binom{n+1}{2}$ as
follows.
\begin{align}
\begin{aligned}
	 \xi_{\chi^{[n]}} &= 2\binom{n}{2}-1\\ 
	 \xi_{\chi^{[n-1,1]}} &= \xi_{\chi^{[n-2,2]}} = \xi_{\chi^{[n-2,1^2]}} = -1\\
	 \xi_{\chi^{[3,1^{n-3}]}} &= 1 \\
	 \xi_{\chi^{[2^2,1^{n-4}]}} &= -1 + \frac{4\nu  t}{n(n-3)} \\ 
	 \xi_{\chi^{[2,1^{n-2}]}} &= \frac{1}{n-1} \left( \binom{n}{2} + \binom{n-1}{2} -2 - 2\beta r  -2\nu t \right)\\
	 \xi_{\chi^{[1^n]}} &=    4\beta r +2\nu t + 3 - 2 \binom{n}{2} - 2\binom{n-1}{2}  
\end{aligned}
\label{eq:evenevalues}
\end{align}

Let $(\mathcal{P}')$ be the polytope obtained by the
following equations of halfspaces of $\mathbb{R}^2$
\begin{align*}
(\mathcal{P}') \quad \left\{ \qquad
\begin{aligned}
& 2\beta  x +\nu y - \binom{n}{2}  - \binom{n-1}{2} +2  > 0\\
& \beta x + \nu y + 1 -\binom{n}{2} < 0\\
& 0 <y .
\end{aligned}
\right.
\end{align*}
This polytope is defined so that the eigenvalues $
\xi_{\chi^{[2^2,1^{n-4}]}}$, $\xi_{\chi^{[2,1^{n-2}]}}$ and
$\xi_{\chi^{[1^n]}}$ are all strictly greater than $-1$.  We let the
reader verify that $(\mathcal{P}')$ is a triangle (without the
boundary) with coordinates
\[
\left(\frac{1}{\beta} \left( \binom{n-1}{2}-1 \right),
      \frac1\nu ({n-2})\right), 
\quad
\left( \frac{1}{\beta} \left( \binom{n}{2}-1 \right) , 0 \right),
\quad 
\left( \frac{1}{\beta} \left(  \binom{n-1}{2} -1 +\frac{n-1}{2} \right), 0 \right).
\]
In particular, $(\mathcal{P}')$ is non-empty.

Next, we prove that for any $(r , t) \in
\mathcal{P}'$, the weightings $w_i$ are all non-negative.

\begin{lem}~\label{lem:evenpositive}
	The weighting $\omega_i(r,0,t)$ is non-negative for any
        $i=1,\ldots,6$, and $(r,t) \in \mathcal{P}'$.
\end{lem}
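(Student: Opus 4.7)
The plan is to verify, one weighting at a time, that $\omega_i(r,0,t) \geq 0$ for $i=1,\dots,6$, by substituting $s=0$ into the general formulas \eqref{point-weights} and then applying the three defining inequalities of $\mathcal{P}'$. The weighting $\omega_4(r,0,t)=0$ is immediate.

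For $\omega_1$, $\omega_3$, and $\omega_5$, each numerator is an affine function of $\beta r + \nu t$, so the needed bounds reduce to direct comparisons with the defining inequalities of $\mathcal{P}'$. Specifically, $\omega_1(r,0,t)\geq 0$ is equivalent to $\beta r + \nu t \leq \binom{n}{2}-1$, which is exactly the second defining inequality of $\mathcal{P}'$. Next, $\omega_5(r,0,t)\geq 0$ reduces to $\beta r + \nu t \leq \binom{n}{2} + \binom{n-1}{2}-1$, which is strictly weaker than the bound used for $\omega_1$. For $\omega_3(r,0,t)\geq 0$ I need $\beta r + \nu t \geq \binom{n-1}{2}-1$; subtracting the second defining inequality of $\mathcal{P}'$ from the first yields $\beta r > \binom{n-1}{2}-1$, and adding the strictly positive quantity $\nu t$ (using the third inequality $t>0$) gives the desired bound.

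The remaining weightings $\omega_2 = r$ and $\omega_6 = t$ are not controlled by any single defining inequality, so here I would fall back on the explicit description of $\mathcal{P}'$ as the interior of a triangle with the three vertices listed above. For $n \geq 5$ the first coordinate of every vertex is strictly positive (the smallest being $\beta^{-1}(\binom{n-1}{2}-1)$, which is positive once $n\geq 4$) and the second coordinates are all non-negative with at least one strictly positive; so by convexity every point of the open triangle satisfies $r > 0$ and $t > 0$.

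I expect the only step that requires genuine thought, rather than a direct linear manipulation, to be the verification $r > 0$: the three defining inequalities individually do not force this, so one must invoke the vertex description of the triangle. Everything else is a short arithmetic comparison.
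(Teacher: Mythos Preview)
Your proposal is correct and follows essentially the same route as the paper's (terse) proof: both reduce each $\omega_i$ to an affine inequality in $\beta r$ and $\nu t$ and then invoke the defining inequalities of $\mathcal{P}'$ together with the range $\beta r \in (\binom{n-1}{2}-1,\binom{n}{2}-1)$. One small simplification: $\omega_6=t>0$ \emph{is} directly the third defining inequality, and your own $\omega_3$ step already gives $\beta r>\binom{n-1}{2}-1>0$ (for $n\ge 4$), so the fall-back to the vertex description for $r>0$ and $t>0$ is unnecessary.
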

\begin{proof}
	Let $(r,t)\in \mathcal{P}'$. From the equations of
        $\mathcal{P}$, we have 
\[
-\beta x- \nu y > 1 - \binom{n}{2}, \qquad
\beta x+\nu y > 2\binom{n}{2} + 2\binom{n-1}{2} -2 -\beta x.
\] 
Using these relations and the fact that $\beta x\in
[\binom{n-1}{2}-1,\binom{n}{2}-1 ]$, one can derive that the
weightings are indeed positive.
\end{proof}

Next, we prove that the eigenvalues of the weighted adjacency matrix
are also in the correct range whenever $(r,t)\in \mathcal{P}'$.

\begin{lem}
  Let $n\geq 13$ and even. If $(r,t) \in \mathcal{P}'$ the eigenvalues of the weighted
  adjacency matrix $A$ defined in \eqref{w-mat-pointwise} are in the
  interval $[-1,2\binom{n}{2}-1]$. In addition, the only irreducible
  character giving the eigenvalue $2\binom{n}{2} - 1$ is $\chi^{[n]}$; and the
  only irreducible characters giving eigenvalue $-1$ are
  $\chi^{[n-1,1]},\ \chi^{[n-2,2]}$ and
  $\chi^{[n-2,1^2]}$.\label{pointwise-ev}
\end{lem}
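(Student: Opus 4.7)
The plan follows the same two-regime strategy as the proof of Lemma~\ref{lem:poscoeffient1} in the setwise case, now adapted to the six conjugacy classes and the extra free parameter. Invoking Lemma~\ref{low-dim}, I would split the irreducible characters of $\sym(n)$ into the eight explicit low-dimensional ones (for which the eigenvalues are given by~\eqref{eq:evenevalues}) and the remaining characters of dimension at least $2\binom{n+1}{2}$, then verify the eigenvalue bounds separately in each regime.

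In the low-dimensional regime, I would read off directly from~\eqref{eq:evenevalues} that $\xi_{\chi^{[n]}} = 2\binom{n}{2}-1$, that $\xi_{\chi^{[n-1,1]}} = \xi_{\chi^{[n-2,2]}} = \xi_{\chi^{[n-2,1^2]}} = -1$ (these equalities are built into the defining system~\eqref{eq_conj_pointw}), and that $\xi_{\chi^{[3,1^{n-3}]}} = 1$. For the three remaining low-dimensional characters $\chi^{[2^2,1^{n-4}]}$, $\chi^{[2,1^{n-2}]}$, and $\chi^{[1^n]}$, the strict inequality $\xi_\chi > -1$ rearranges—using $s = 0$ and $n$ even—into exactly the three halfspaces defining $\mathcal{P}'$. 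For example, $\xi_{\chi^{[2^2,1^{n-4}]}} > -1$ is equivalent to $\nu t > 0$, which is the third defining inequality; and a short calculation using $\binom{n-1}{2} + (n-1) = \binom{n}{2}$ shows that $\xi_{\chi^{[2,1^{n-2}]}} > -1$ and $\xi_{\chi^{[1^n]}} > -1$ are equivalent respectively to the second and first inequalities of $\mathcal{P}'$. The upper bounds $\xi_\chi < 2\binom{n}{2}-1$ for these three are easy because each is $O(\beta r + \nu t) = O(n^2)$ in magnitude, while the vertices of $\mathcal{P}'$ keep $\beta r$ and $\nu t$ comfortably below $n^2 - n - 1 = 2\binom{n}{2} - 1$ for $n \ge 13$.

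For the high-dimensional regime, I would mimic the triangle inequality argument of Lemma~\ref{lem:poscoeffient1} applied to the eigenvalue formula~\eqref{point-eig}. By Lemma~\ref{lem:evenpositive} every weighting $\omega_i(r,0,t)$ is non-negative, so
\[
|\xi_\chi| \;\le\; \frac{1}{\chi(\id)}\sum_{i=1}^6 \omega_i(r,0,t)\,|C_{\rho_i}|\,|\chi_{\rho_i}|.
\]
Combining the character bound $|\chi^\lambda_\rho| \le 1$ on the six chosen conjugacy classes (the analog of the bound used implicitly in Lemma~\ref{lem:poscoeffient1}) with the identity $\sum_i \omega_i|C_{\rho_i}| = 2\binom{n}{2}-1$ coming from the first equation of~\eqref{eq_conj_pointw}, one obtains
\[
|\xi_\chi| \;\le\; \frac{2\binom{n}{2}-1}{\chi(\id)} \;\le\; \frac{2\binom{n}{2}-1}{2\binom{n+1}{2}} \;<\; 1,
\]
which places $\xi_\chi$ strictly in $(-1, 1) \subset (-1, 2\binom{n}{2}-1)$, as required.

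The main technical obstacle is the character bound $|\chi^\lambda_\rho| \le 1$ on the two new conjugacy classes $(n-2,2)$ and $(n-3,2,1)$: unlike the single-long-cycle classes, Lemma~\ref{MurnNakRule} here can produce several competing rim-hook tableaux whose signed contributions do not cancel in general (already $\chi^{[2,2]}_{(2,2)} = 2$). For the high-dimensional regime, however, it suffices to have any $o(n)$ bound on $|\chi^\lambda_\rho|$ since this is absorbed by the quadratic denominator $\chi^\lambda(\id) \ge n(n+1)$; the bounded list of intermediate characters where such a clean bound might fail can then be handled individually using the hook-length formula and Lemma~\ref{MurnNakRule}.
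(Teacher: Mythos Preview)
Your two-regime split, the treatment of the eight low-dimensional characters via~\eqref{eq:evenevalues} and the defining inequalities of $\mathcal{P}'$, and the triangle-inequality bound $|\xi_\chi|\le (2\binom{n}{2}-1)/\chi(\id)<1$ for the high-dimensional characters are exactly what the paper does. The only point where you diverge is your final paragraph: you flag the bound $|\chi^\lambda_\rho|\le 1$ on the classes $(n-2,2)$ and $(n-3,2,1)$ as a potential obstacle and sketch an $o(n)$ workaround. In fact the paper simply asserts this bound, and it is justified by the appendix (Table~\ref{char-val-pointwise}), which lists \emph{all} irreducible characters not vanishing on the six chosen classes and shows every value lies in $\{-1,0,1\}$. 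The phenomenon you worry about (e.g.\ $\chi^{[2,2]}_{(2,2)}=2$) does not occur here because each of the six cycle types has a part of length at least $n-4$, so for $n\ge 13$ the first rim hook in Murnaghan--Nakayama is unique (or absent), forcing the character value into $\{-1,0,1\}$. With that table in hand your workaround is unnecessary and the proof is identical to the paper's.
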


\begin{proof}
  First we show that the statement holds for all irreducible
  characters $\chi$ of $\sym(n)$ with
$\chi(\id) >2\binom{n+1}{2}$. On each of the conjugacy classes that we
consider, the value of any irreducible representation is bounded by
$1$. For any $(r,t) \in \mathcal{P}'$ we have the following bound.
	 \begin{align*}
	 	|\xi_{\chi}(r,0,t)| &\leq \frac{1}{2\binom{n+1}{2}}
                \left( \left(\binom{n}{2}-1 -\beta r -\nu t \right)
                  \left|\chi_{(n)}\right| + \beta r
                  \left|\chi_{(n-1,1)}\right| \right.\\ 
                 &\quad +\left. \left(1-\binom{n-1}{2} + \beta r + \nu t \right) \left|\chi_{(n-2,2)}\right|  \right. \\
	 	 &\quad +\left. \left(\binom{n}{2}+\binom{n-1}{2} -1 -\beta r -\nu t \right) \left|\chi_{(n-3,2,1)}\right| +\nu t\left|\chi_{(n-4,3,1)}\right|  \right)\\
	 	&= \frac{2\binom{n}{2}-1}{2\binom{n+1}{2}}<1.
	 \end{align*}

Thus the eigenvalue of $A$ for $\chi$ is bounded in
  absolute value by $1$ and the statement holds for all irreducible
  characters except $[n]$,$[1^n]$, $[n-1,1]$, $[2,1^{n-2}]$,
  $[n-2,2]$, $[2^2,1^{n-4}]$, $[n-2,1^2]$ or $[3,1^{n-3}]$.

  It is straightforward to see that the non-trivial character all give
  eigenvalue less than $2\binom{n}{2}-1$.

Finally, Equations~\ref{eq:evenevalues} and the definition of
$\mathcal{P}'$ show that the statement holds for all other irreducible characters.
\end{proof}

%%%%%%%%%%%%%%%%%%
%%%%%%%%%%%%%%%%%

\subsection{Subcase 2: $n$ odd}

If $n$ is odd, we first note that in the expression of the eigenvalue
for $[3,1^{n-3}]$, the value of $s$ must to be negative for the
inequality $\xi_{\chi^{[3,1^{n-3}]}} >-1$ to hold. We will only use
five conjugacy classes, so we drop conjugacy classes with cycle type
$(n-4,3,1)$ by making $t = 0$.  As in the previous case, we will
consider the irreducible representations wih degree less than
$2\binom{n+1}{2}$ and greater than $2\binom{n+1}{2}$ separately.  The
eigenvalues belonging to representations with degree less than
$2\binom{n+1}{2}$ are
	\begin{align*}
	\xi_{\chi^{[n]}} &= 2\binom{n}{2}-1. \\
	\xi_{\chi^{[n-1,1]}} &= \xi_{\chi^{[n-2,2]}} = \xi_{\chi^{[n-2,1^2]}} = -1\\
	\xi_{\chi^{[3,1^{n-3}]}} &= -1 -\frac{2\delta s}{\binom{n-1}{2}}\\
	\xi_{\chi^{[2^2,1^{n-4}]}} &= 1   \\
	\xi_{\chi^{[2,1^{n-2}]}} &= \frac{-1}{n-1} \left( \binom{n}{2} + \binom{n-1}{2} -2 - 2\beta r -2\delta s  \right)\\
	\xi_{\chi^{[1^n]}} &=  -\left( 4\beta r + 2 \delta s + 3 - 2 \binom{n}{2} - 2\binom{n-1}{2} \right) 
	\end{align*}

Let $(\mathcal{P}'')$ be the polytope of $\mathbb{R}^2$
defined as follows.
\begin{align*}
	(\mathcal{P}'') \quad \left\{ \qquad
		\begin{aligned}
			& 2\beta x + \delta y +1 - \binom{n}{2} - \binom{n-1}{2} <0\\
			&\beta x + \delta y + 1 - \binom{n-1}{2} >0\\
			& y <0.
		\end{aligned}
	\right.
\end{align*}
Just as in the case when $n$ is even, this polytope is defined so that
the eigenvalues $\xi_{\chi^{[2^2,1^{n-4}]}}$,
$\xi_{\chi^{[2,1^{n-2}]}}$ and $\xi_{\chi^{[1^n]}}$ are all strictly
greater than $-1$.  The polytope $(\mathcal{P}'')$ is a triangle
(without the boundary) with coordinates
\[
\left( \frac{1}{\beta}\binom{n}{2}, \frac{-n}{\delta} \right),
\quad
\left( \frac{1}{\beta}\left(\binom{n-1}{2}-1\right),0 \right), 
\quad
\left( \frac{1}{2\beta} \left( \binom{n}{2} + \binom{n-1}{2}-1
  \right),0 \right).
\]

In the next lemma, we state that all weightings, except $\omega_4$, are
positive for any $(r,s) \in \mathcal{P}''$. We give the statement of this without a proof since it is
straightforward.

\begin{lem}
	For $i\in \{1,2,3,5,6\}$ and for $(r,s)\in \mathcal{P}''$, we have $\omega_i(r,s,0)\geq 0$.
\end{lem}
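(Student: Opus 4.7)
The approach is a direct, case-by-case verification using the formulas \eqref{point-weights} specialized to $t=0$, together with the three defining inequalities of $\mathcal{P}''$. The case $\omega_6(r,s,0) = 0$ is immediate. First I would extract two bounds on $\beta r$ from $\mathcal{P}''$: subtracting the second inequality from the first yields $\beta r < \binom{n}{2}$, while combining the second inequality with $s < 0$ (so $\delta s < 0$) gives $\beta r > \binom{n-1}{2} - 1$. These bounds already dispatch three of the four remaining cases: $\omega_2 = r$ is positive since $\beta > 0$ and $\beta r > 0$; the numerator of $\omega_3$ is $1 - \binom{n-1}{2} + \beta r > 0$ by the lower bound; and the numerator of $\omega_5$ is $\binom{n}{2} + \binom{n-1}{2} - 1 - \beta r > 0$ by the upper bound (using $\binom{n-1}{2} \geq 1$).

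The remaining case is
\[
\omega_1(r,s,0) = \frac{1}{\alpha}\left(\binom{n}{2} - 1 - \beta r - \delta s\right),
\]
for which the naive bound $\beta r + \delta s < \beta r < \binom{n}{2}$ is not quite strong enough to force $\binom{n}{2} - 1 - \beta r - \delta s \geq 0$. My plan here is to exploit linearity: $\binom{n}{2} - 1 - \beta r - \delta s$ is an affine function of $(r,s)$, so its infimum over the closed triangle $\overline{\mathcal{P}''}$ is attained at one of the three vertices listed in the excerpt. Evaluating at
\[
\left(\tfrac{1}{\beta}\binom{n}{2},\, \tfrac{-n}{\delta}\right),\quad
\left(\tfrac{1}{\beta}(\binom{n-1}{2}-1),\, 0\right),\quad
\left(\tfrac{1}{2\beta}(\binom{n}{2}+\binom{n-1}{2}-1),\, 0\right)
\]
yields the three values $n-1$, $n-1$, and $\tfrac{n-2}{2}$ respectively, all strictly positive for $n \geq 3$. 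Convexity then gives $\omega_1 \geq 0$ throughout $\mathcal{P}''$.

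The main (mild) obstacle is precisely this $\omega_1$ case, since the defining inequalities of $\mathcal{P}''$ do not separately bound $\beta r + \delta s$ from above by $\binom{n}{2} - 1$. Recognizing that the relevant quantity is affine and reducing to vertex evaluation on a triangle sidesteps this cleanly, without any need for further case splits on the parity of $n$ (we are already inside the odd subcase, and $\mathcal{P}''$ was built precisely to accommodate $s < 0$).
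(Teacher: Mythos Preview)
Your proposal is correct. The paper itself omits the proof of this lemma as ``straightforward,'' so there is no published argument to compare against; your case-by-case verification via the derived bounds $\binom{n-1}{2}-1 < \beta r < \binom{n}{2}$, together with the vertex-evaluation argument for $\omega_1$, is exactly the kind of direct check the paper intends and mirrors the style of the analogous even-case lemma (Lemma~\ref{lem:evenpositive}).
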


As in the even case, the eigenvalues of the weighted adjacency matrix
are in the required range whenever $(r,s)\in \mathcal{P}''$. We omit the
proof as it is identical to the case where $n$ is even.

\begin{lem}\label{pointwise-odd}
  Let $n\geq 13$ and odd. For $(r,s)\in \mathcal{P}''$, the eigenvalues
  of the weighted adjacency matrix $A$ defined in
  \eqref{w-mat-pointwise} are in $[-1,2\binom{n}{2}-1]$. Moreover, the
  only irreducible characters giving eigenvalue $-1$ are
  $\chi^{[n-1,1]},\ \chi^{[n-2,1^2]}$ and $\chi^{[n-2,2]}$.
\end{lem}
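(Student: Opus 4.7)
The plan is to replay the proof of Lemma~\ref{pointwise-ev} \emph{mutatis mutandis}, with the role of $s=0$ (from the even case) now played by $t=0$. With $t=0$, the weighted matrix $A$ in \eqref{w-mat-pointwise} uses only five conjugacy classes, and the general eigenvalue formula \eqref{point-eig} becomes a linear function of $r$ and $s$ alone.

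First I would dispose of the high-dimensional characters. For any $\chi\in\operatorname{Irr}(\sym(n))$ with $\chi(\id)>2\binom{n+1}{2}$, apply the triangle inequality to \eqref{point-eig} with $t=0$ together with the crude estimate $|\chi_\rho|\le 1$ on each of the five classes that carry a non-zero weight. Because $(r,s)\in\mathcal{P}''$ keeps all coefficients of the character values non-negative, the bracketed expression telescopes; evaluating it on the trivial character (where every $\chi_\rho = 1$) shows that this sum equals $2\binom{n}{2}-1$. The resulting estimate
\[
|\xi_\chi(r,s,0)|\le\frac{2\binom{n}{2}-1}{\chi(\id)}<1
\]
places every such eigenvalue strictly inside $[-1,\,2\binom{n}{2}-1]$, so none of these high-dimensional characters can attain either extremum.

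Next, I would treat the eight low-dimensional characters enumerated in Lemma~\ref{low-dim}, using the explicit closed forms written immediately before the statement of Lemma~\ref{pointwise-odd}. By construction via the linear system \eqref{eq_conj_pointw}, $\chi^{[n]}$ contributes the eigenvalue $2\binom{n}{2}-1$, each of $\chi^{[n-1,1]}$, $\chi^{[n-2,2]}$, $\chi^{[n-2,1^2]}$ contributes $-1$, and $\chi^{[2^2,1^{n-4}]}$ contributes $1$. The remaining three eigenvalues, belonging to $\chi^{[3,1^{n-3}]}$, $\chi^{[2,1^{n-2}]}$, $\chi^{[1^n]}$, are, after clearing denominators, strictly greater than $-1$ if and only if (respectively) $s<0$, $\beta r+\delta s+1-\binom{n-1}{2}>0$, and $2\beta r+\delta s+1-\binom{n}{2}-\binom{n-1}{2}<0$. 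These are precisely the three half-space inequalities defining $\mathcal{P}''$, so the lower bound $\xi_\chi \geq -1$ holds throughout the polytope. The complementary upper bound $\xi_\chi<2\binom{n}{2}-1$ for these three characters is immediate from the boundedness of $(r,s)$ on $\mathcal{P}''$.

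The main bookkeeping obstacle, and what will need careful attention, is tracking the $(-1)^n$ sign flips appearing in the general formula for $\xi_{\chi^{[3,1^{n-3}]}}$, $\xi_{\chi^{[2,1^{n-2}]}}$, and $\xi_{\chi^{[1^n]}}$: because $n$ is odd, each of these expressions carries the opposite overall sign from its even-case counterpart, which is exactly why $s$ must be forced negative here (rather than $t$ forced positive as before) and why the other two cutting halfspaces defining $\mathcal{P}''$ are oriented as they are. Once these signs are confirmed, $\mathcal{P}''$ is seen to be defined \emph{exactly} so that the three exceptional low-dimensional eigenvalues clear $-1$, and the argument closes in the same manner as Lemma~\ref{pointwise-ev}.
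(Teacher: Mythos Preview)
Your overall plan matches the paper's, and the treatment of the eight low-dimensional characters is correct. There is, however, a genuine gap in your high-dimensional estimate.

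You assert that ``$(r,s)\in\mathcal{P}''$ keeps all coefficients of the character values non-negative'' and hence the triangle-inequality sum telescopes to $2\binom{n}{2}-1$. This is false: the third defining inequality of $\mathcal{P}''$ is $s<0$, so the weight $\omega_4=s$ is strictly negative (indeed Lemma~5.3 pointedly omits $i=4$). Consequently the coefficient of $\chi_{(n-3,3)}$ in \eqref{point-eig} is $\delta s<0$, and after taking absolute values the sum of the five coefficients is
\[
\Bigl(\tbinom{n}{2}-1-\beta r-\delta s\Bigr)+\beta r+\Bigl(1-\tbinom{n-1}{2}+\beta r\Bigr)+|\delta s|+\Bigl(\tbinom{n}{2}+\tbinom{n-1}{2}-1-\beta r\Bigr)
=2\tbinom{n}{2}-1+2|\delta s|,
\]
not $2\binom{n}{2}-1$. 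Your claimed bound $|\xi_\chi|\le\bigl(2\binom{n}{2}-1\bigr)/\chi(\id)$ therefore does not follow.

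The paper repairs this by reading off from the vertices of the triangle $\mathcal{P}''$ that $\delta s\in(-n,0)$, whence $2|\delta s|<2n$ and
\[
|\xi_\chi(r,s,0)|\le\frac{2\binom{n}{2}-1+2n}{2\binom{n+1}{2}}=\frac{n^2+n-1}{n^2+n}<1.
\]
You need this extra step; without it the high-dimensional case is unproved. (Ironically, the negativity of $s$ that breaks your telescoping is exactly the sign flip you correctly identified as the ``main bookkeeping obstacle'' in the low-dimensional part.)
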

\begin{proof}
Lemma~\ref{low-dim} gives the eight irreducible representations of
$\sym(n)$ with degree less than $2\binom{n+1}{2}$. The polytope
$\mathcal{P}''$ is defined so that this result holds
for these eight irreducible representation.

Next assume $\chi$ is an irreducible character of $\sym(n)$ with $\chi(\id)
>2\binom{n+1}{2}$. On each of the conjugacy classes that we consider,
the value of any irreducible representation is bounded by $1$. 
Noting that $(r,s) \in \mathcal{P}''$ implies $\delta s \in (-n,0)$, we have the following bound.
\begin{align*}
|\xi_{\chi} (r,s,0)| &\leq \frac{1}{\chi(\id)} \left( \binom{n}{2}
          -1 -\beta r - \delta s \right) \left| \chi_{(n)} \right| +
        \beta r \left|  \chi_{(n-1,1)} \right|+ \left(1 -
          \binom{n-1}{2} + \beta r  \right) \left| \chi_{(n-2,2)} \right|\\
	&\quad +\left|\delta s\right| \left| \chi_{(n-3,3)}\right| + \left( \binom{n}{2} +
          \binom{n-1}{2} -1 - \beta r \right) \left| \chi_{(n-3,2,1)} \right|.\\
&\leq \frac{1}{2\binom{n+1}{2}}   \left( \binom{n}{2}
          -1 -\beta r +n \right) + \beta r  + 
  \left( 1 - \binom{n-1}{2} + \beta r  \right) \\
	&\quad +n  + \left( \binom{n}{2} +      \binom{n-1}{2} -1 - \beta r \right).\\
& \leq  \frac{1}{2\binom{n+1}{2}} \left( 2\binom{n}{2}-1 +2n\right) \\
& < \frac{n^2+n-1}{n^2+n} \\
&<1.
\end{align*}
Thus the eigenvalue corresponding to any irreducible representation
with degree greater than or equal to $2\binom{n+1}{2}$ is strictly
between $-1$ and $1$, so the result holds.
\end{proof}

The proof of Theorem~\ref{main-pointwise} follows from
Lemma~\ref{pointwise-ev} and Lemma~\ref{pointwise-odd} using the ratio
bound on $\Gamma_n$ with the weighted adjacency matrix $A$. 

\begin{proof}[Proof of Theorem~\ref{main-pointwise}]
  If $5\leq n \leq 12$, we use Sagemath \cite{sagemath} to confirm
  that the result holds. For $n\geq 13$, we prove the result by
  using Theorem~\ref{thm:wtRatio}. Therefore, if $n$ is even, then
  $\alpha(\Gamma_{\sym(n)}) \leq \alpha(X_1) = (n-2)!$, and similarly,
  if $n$ is odd, then $\alpha(\Gamma_{\sym(n)})\leq \alpha(X_2) =
  (n-2)!$.
\end{proof}

We finish this section by proving a conjecture of Godsil and Meagher (\cite{godsil2009new}, Conjecture~7.3).

\begin{cor}
  Let $n \geq 5$. If $\chi$ is the characteristic vector of a maximum $2$-pointwise
  intersecting family of $\sym(n)$, then $\chi \in V_{[n]} \oplus
  V_{[n-1,1]} \oplus V_{[n-2,2]} \oplus V_{[n-2,1^2]}$.
\end{cor}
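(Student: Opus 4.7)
The plan is to read off this corollary as an immediate consequence of the equality case of Theorem~\ref{thm:wtRatio} applied to the weighted adjacency matrix $A$ built in the proof of Theorem~\ref{main-pointwise}. Concretely, I would first observe that, since the pointwise stabilizer of any $2$-subset of $\{1,2,\dots,n\}$ is a $2$-pointwise intersecting family of size exactly $(n-2)!$, the upper bound $\alpha(\Gamma_n) \le (n-2)!$ from Theorem~\ref{main-pointwise} is attained; hence equality holds in the weighted ratio bound for $A$ whenever $S$ is a maximum $2$-pointwise intersecting family.

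Next I would apply the ``moreover'' clause of Theorem~\ref{thm:wtRatio}. Let $S$ be a maximum $2$-pointwise intersecting family, with characteristic vector $v_S$, and let $\tau=-1$ denote the minimum eigenvalue of $A$. Then
\[
v_S \;-\; \frac{|S|}{n!}\,\one
\]
is a $(-1)$-eigenvector for $A$, so it lies in the $(-1)$-eigenspace of $A$. Since $A$ is a real linear combination of matrices in the conjugacy class scheme for $\sym(n)$, its eigenspaces are direct sums of the isotypic modules $V_\lambda$ indexed by those $\lambda\vdash n$ whose eigenvalue $\xi_{\chi^\lambda}$ equals the chosen one. Now invoke Lemma~\ref{pointwise-ev} (for $n\ge 13$ even) and Lemma~\ref{pointwise-odd} (for $n\ge 13$ odd): in each case the only irreducible characters with $\xi_{\chi}=-1$ are $\chi^{[n-1,1]}$, $\chi^{[n-2,2]}$ and $\chi^{[n-2,1^2]}$. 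Therefore the $(-1)$-eigenspace of $A$ is precisely $V_{[n-1,1]}\oplus V_{[n-2,2]}\oplus V_{[n-2,1^2]}$, and adding the trivial part $\tfrac{|S|}{n!}\one\in V_{[n]}$ back we obtain $v_S\in V_{[n]}\oplus V_{[n-1,1]}\oplus V_{[n-2,2]}\oplus V_{[n-2,1^2]}$, as required.

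For the residual range $5\le n\le 12$, which is not covered by the two lemmas above, I would follow the same strategy used at the end of the proof of Theorem~\ref{main-pointwise} and verify the module containment by direct computation in Sagemath: enumerate a set of maximum $2$-pointwise intersecting families (up to the natural $\sym(n)\times\sym(n)$ action by left and right translation, which preserves the module property), compute each characteristic vector in the regular module, and check that its projection onto every $V_\lambda$ with $\lambda\notin\{[n],[n-1,1],[n-2,2],[n-2,1^2]\}$ vanishes.

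There is no real obstacle beyond assembling these ingredients; the argument is essentially a corollary of the equality case of the weighted ratio bound, identical in structure to the setwise corollary immediately following Theorem~\ref{main-thm}. The only mild subtlety is making sure that one has truly identified \emph{all} irreducible characters achieving the eigenvalue $-1$; this is handled separately for the low-dimensional characters by the explicit formulas preceding Lemma~\ref{pointwise-ev}/Lemma~\ref{pointwise-odd} and for the higher-dimensional characters by the uniform bound $|\xi_\chi|<1$ proved in those same lemmas, so that the $(-1)$-eigenspace contains nothing outside the three listed modules.
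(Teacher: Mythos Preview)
Your proposal is correct and matches the paper's approach: the corollary is stated there without an explicit proof, as it follows immediately from the equality case of Theorem~\ref{thm:wtRatio} together with Lemmas~\ref{pointwise-ev} and~\ref{pointwise-odd} identifying the $(-1)$-eigenspace, exactly as you outline (and paralleling the setwise corollary after Theorem~\ref{main-thm}). One small practical remark on the residual range $5\le n\le 12$: rather than enumerating all maximum cocliques, it is computationally lighter to do what the paper does for Theorem~\ref{main-pointwise} itself, namely exhibit in Sagemath a specific weighted adjacency matrix whose ratio bound equals $(n-2)!$ and whose $(-1)$-eigenspace is precisely $V_{[n-1,1]}\oplus V_{[n-2,2]}\oplus V_{[n-2,1^2]}$.
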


\section{Further work}\label{further-work}

In this paper we construct weighted adjacency matrices for the
derangement graphs of $\sym(n)$ for two different actions. This work
proves that the conjectured lower bounds on $n$ for each action are
indeed the correct bound when $t=2$. It is also interesting that this work also
shows that there are infinitely many weighted adjacency matrices that
would work in the ratio bound. We leave
the reader with two open problems.

\begin{prob}
  In Theorem~\ref{main-thm} and Theorem~\ref{main-pointwise}, we only
  proved that any characterstic vector of a maximum $2$-setwise and
  $2$-pointwise intersecting family are in their respective
  permutation module. Characterize the maximum intersecting families
  for both type of intersections.
\end{prob}

\begin{prob}
  Prove that the $t$-setwise intersecting property of $\sym(n)$ holds
  for any $3\leq t\leq n$ and for $n\geq 2t+1$, prove that $\sym(n)$
  has the $t$-pointwise intersecting property.
\end{prob}

\bibliographystyle{plain}
%\bibliography{ref-setwise}

\begin{thebibliography}{10}

\bibitem{babai1979spectra}
L{\'a}szl{\'o} Babai.
\newblock Spectra of {C}ayley graphs.
\newblock {\em Journal of Combinatorial Theory, Series B}, 27(2):180--189,
  1979.

\bibitem{cameron2003intersecting}
Peter~J. Cameron and Cheng~Yeaw Ku.
\newblock Intersecting families of permutations.
\newblock {\em European Journal of Combinatorics}, 24(7):881--890, 2003.

\bibitem{deza1983erdos}
M.~Deza and P.~Frankl.
\newblock {E}rd{\"o}s--{K}o--{R}ado theorem---22 years later.
\newblock {\em SIAM Journal on Algebraic Discrete Methods}, 4(4):419--431,
  1983.

\bibitem{deza1978intersection}
Michel Deza, Paul Erd\H{o}s, and P{\'e}ter Frankl.
\newblock Intersection properties of systems of finite sets.
\newblock {\em Proceedings of the London Mathematical Society}, 3(2):369--384,
  1978.

\bibitem{MR626813}
Persi Diaconis and Mehrdad Shahshahani.
\newblock Generating a random permutation with random transpositions.
\newblock {\em Z. Wahrsch. Verw. Gebiete}, 57(2):159--179, 1981.


\bibitem{ellis2012setwise}
David Ellis.
\newblock Setwise intersecting families of permutations.
\newblock {\em Journal of Combinatorial Theory, Series A}, 119(4):825--849,
  2012.

\bibitem{ellis2011intersecting}
David Ellis, Ehud Friedgut, and Haran Pilpel.
\newblock Intersecting families of permutations.
\newblock {\em Journal of the American Mathematical Society}, 24(3):649--682,
  2011.

\bibitem{erdos1961intersection}
P.~Erd\H{o}s, Chao Ko, and R.~Rado.
\newblock Intersection theorems for systems of finite sets.
\newblock {\em The Quarterly Journal of Mathematics}, 12(1):313--320, 1961.

\bibitem{frame1954hook}
J~S. Frame, G.~de~B. Robinson, and R.~M. Thrall.
\newblock The hook graphs of the symmetric group.
\newblock {\em Canadian Journal of Mathematics}, 6:316--324, 1954.

\bibitem{Frankl1977maximum}
Peter Frankl and Mikhail Deza.
\newblock On the maximum number of permutations with given maximal or minimal
  distance.
\newblock {\em Journal of Combinatorial Theory, Series A}, 22(3):352--360,
  1977.

\bibitem{furedi2006proof}
Zolt{\'a}n F{\"u}redi, Kyung-Won Hwang, and Paul~M. Weichsel.
\newblock A proof and generalizations of the {E}rd{\H{o}}s-{K}o-{R}ado theorem using
  the method of linearly independent polynomials.
\newblock In {\em Topics in discrete mathematics}, pages 215--224. Springer,
  2006.

\bibitem{godsil2009new}
Chris Godsil and Karen Meagher.
\newblock A new proof of the {E}rd{\H{o}}s--{K}o--{R}ado theorem for
  intersecting families of permutations.
\newblock {\em European Journal of Combinatorics}, 30(2):404--414, 2009.

\bibitem{godsil2015algebraic}
Chris Godsil and Karen Meagher.
\newblock An algebraic proof of the {E}rd\H{o}s-{K}o-{R}ado theorem for intersecting
  families of perfect matchings.
\newblock{\em Ars Math. Contemp.} 12(2):205--217, 2017.

\bibitem{godsil2016erdos}
Christopher Godsil and Karen Meagher.
\newblock {\em Erd\H{o}s-Ko-Rado Theorems: Algebraic Approaches}.
\newblock Number 149. Cambridge University Press, 2016.

\bibitem{katona1972simple}
Gyula~O.~H.~Katona.
\newblock A simple proof of the {E}rd{\"o}s-{K}o-{R}ado theorem.
\newblock {\em Journal of Combinatorial Theory, Series B}, 13(2):183--184,
  1972.

\bibitem{larose2004stable}
Benoit Larose and Claudia Malvenuto.
\newblock Stable sets of maximal size in {K}neser-type graphs.
\newblock {\em European Journal of Combinatorics}, 25(5):657--673, 2004.

\bibitem{MeagherSin}
Karen Meagher and Peter Sin.
\newblock{All 2-transitive groups have the EKR-module property.}
\newblock{arXiv:1911.11252 [math.CO].}

\bibitem{sagan2001symmetric}
B. E.~Sagan.
\newblock {\em The Symmetric Group: Representations, Combinatorial Algorithms,
  and Symmetric Functions (Graduate Texts in Mathematics)}.
\newblock New York: Springer, 2001.

\bibitem{sagemath}
{The Sage Developers}.
\newblock {\em {S}ageMath, the {S}age {M}athematics {S}oftware {S}ystem
  ({V}ersion 8.9)}, 2020.
\newblock {\tt https://www.sagemath.org}.

\end{thebibliography}

\appendix
\section{ }
\begin{table}[H]
	\centering
	\begin{tabular}{ |c|c|c|c|c|c|c|c|} \hline
		& & $C_{(n-3,3)}$ &  $ C_{(n-4,3,1)}$ & $C_{(n)}$ & $C_{(n-1,1)}$ &\centering $C_{(n-2,2)}$ &$ C_{(n-3,2,1)}$ \\
		& & & & & & & \\
		Representation & Range of $k$ & & & & & &\\ \hline
		$\chi^{[n]}$ &  - &  $1$ & $1$ & $1$ & $1$ & $1$ &  $1$  \\ \hline
		$\chi^{[n-1,1]}$& - & $-1$& $0$ & $-1$ & $0$ & $-1$ & $0$ \\ \hline
		$\chi^{[n-2,2]}$& -& $0$& $-1$ & $0$ & $-1$ & $1$  & $0$ \\ \hline
		$\chi^{[n-2,1^2]}$& -& $1$& $0$& $1$ & $0$ & $0$ & $-1$ \\ \hline
		$\chi^{[n-3,3]}$& -& $1$& $1$& $0$ & $0$ & $-1$ & $0$  \\ \hline
		$\chi^{[n-3,2,1]}$& -& $-1$& $0$& $0$ & $0$ & $0$ & $1$  \\ \hline
		$\chi^{[n-3,1^3]}$& -& $0$& $-1$& $-1$ & $0$ & $0$ & $0$ \\ \hline
		$\chi^{[n-4,2^2]}$& -& $1$& $0$& $0$ & $0$  & $-1$ & $-1$ \\ \hline
		$\chi^{[n-4,2,1^2]}$& -& $0$& $1$& $0$ & $-1$  & $0$ & $0$ \\ \hline
		$\chi^{[n-5,2^2,1]}$& -& $0$& $-1$& $0$ & $0$  & $1$ & $0$ \\ \hline
		$\chi^{[n-6,2^3]}$& -& $-1$& $-1$& $0$ & $0$  & $0$ & $1$ \\ \hline
		$\chi^{[n-k-4,4,1^k]}$& $0 \leq k< n-8$ & $(-1)^{k+1}$& $0$& $0$ & $0$  & $0$ & $(-1)^{k+1}$ \\ \hline
		$\chi^{[n-k-5,5,1^k]}$&  $0 \leq k\leq n-10$ &  $0$ &  $(-1)^{k+1}$& $0$ & $0$  & $0$ & $0$ \\ \hline
		$\chi^{[n-k-5,3,2,1^k]}$& $0 \leq k\leq n-8$ & $(-1)^{k+3}$& $0$& $0$ & $0$  & $0$ & $0$ \\ \hline
		$\chi^{[n-k-6,2^3,1^k]}$& $0 < k\leq n-8$ & $(-1)^{k+3}$& $0$& $0$ & $0$  & $0$ & $(-1)^{k+4}$ \\ \hline
		$\chi^{[n-k-6,3^2,1^k]}$& $0 \leq k\leq n-9$ & $0$& $(-1)^{k+1}$& $0$ & $0$  & $0$ & $0$ \\ \hline
		$\chi^{[n-k-8,2^4,1^k]}$& $0 \leq k\leq n-10$ & $0$& $(-1)^{k+4}$& $0$ & $0$ & $0$ & $0$  \\ \hline
		$\chi^{[n-k,1^k]}$& $4 \leq k\leq n-5$ & $0$& $0$& $(-1)^{k}$ & $0$ & $0$ & $0$ \\ \hline
		$\chi^{[n-k-2,2,1^k]}$& $3 \leq k\leq n-6$ & $0$& $0$& $0$ & $(-1)^{k+1}$  & $0$ & $0$ \\ \hline
		$\chi^{[4^2,1^{n-8}]}$& -& $(-1)^{n-7}$& $(-1)^{n-6}$& $0$ & $0$  & $0$ & $(-1)^{n-7}$ \\ \hline
		$\chi^{[4,3,1^{n-7}]}$& -& $0$& $(-1)^{n-5}$& $0$ & $0$  & $(-1)^{n-6}$ & $0$ \\ \hline
		$\chi^{[4,2,1^{n-6}]}$& -& $0$& $(-1)^{n-4}$& $0$ & $(-1)^{n-5}$  & $0$ & $0$ \\ \hline
		$\chi^{[3^2,1^{n-6}]}$& -& $(-1)^{n-4}$& $0$& $0$ & $0$  & $(-1)^{n-5}$ & $(-1)^{n-6}$ \\ \hline
		$\chi^{[4,1^{n-4}]}$& -& $0$& $(-1)^{n-4}$& $(-1)^{n-4}$ & $0$  & $0$ & $0$ \\ \hline
		$\chi^{[3,2,1^{n-5}]}$& -& $(-1)^{n-3}$& $0$& $0$ & $(-1)^{n-4}$  & $0$ & $(-1)^{n-5}$ \\ \hline
		$\chi^{[2^3,1^{n-6}]}$& -& $(-1)^{n-2}$& $(-1)^{n-3}$& $0$ & $(-1)^{n-5}$  & $(-1)^{n-3}$ & $0$ \\ \hline
		$\chi^{[3,1^{n-3}]}$& -& $(-1)^{n-2}$& $0$& $(-1)^{n-3}$ & $0$  & $0$ & $(-1)^{n-4}$ \\ \hline
		$\chi^{[2^2,1^{n-4}]}$& -& $0$& $(-1)^{n-4}$ & $0$ & $(-1)^{n-3}$  & $(-1)^{n-4}$ & $0$ \\ \hline
		$\chi^{[2,1^{n-2}]}$& -& $(-1)^{n-1}$& $0$& $(-1)^{n-2}$ & $0$  & $(-1)^{n-3}$ & $0$ \\ \hline
		$\chi^{[1^n]}$& - &  $(-1)^{n-2}$ &  $(-1)^{n-1}$ & $(-1)^{n-1}$ & $(-1)^{n-2}$  & $(-1)^{n-2}$ & $(-1)^{n-3}$ \\ \hline
	\end{tabular}
	\caption{Values of irreducible characters on conjugacy classes
          $C_{(n-3,3)}$, $ C_{(n-4,3,1)}$, $C_{(n)}$, $C_{(n-1,1)}$,
          $C_{(n-2,2)}$ and $ C_{(n-3,2,1)}$.}\label{char-val-pointwise}
\end{table}
\end{document}